\newtheorem{thm}{Theorem}[section]
\newtheorem{lemma}[thm]{Lemma}
\newtheorem{cor}[thm]{Corollary}
\newtheorem{prop}[thm]{Proposition}
\theoremstyle{plain}
\newtheorem{theo}[thm]{Theorem}
\newtheorem{lem}[thm]{Lemma}
\newtheorem{conj}[thm]{Conjecture}
\theoremstyle{definition}
\numberwithin{equation}{section}
\def\sq{\square}
\def\zz{\mathbb Z}
\def\nn{\mathbb N}
\def\rr{\mathbb R}
\def\qqq{\mathbb Q}
\def\De{\Delta}
\def\la{\lambda}
\def\ga{\gamma}
\def\de{\delta}
\def\ep{\ve}
\def\al{\alpha}
\def\be{\beta}
\def\om{\omega}
\def\ve{\varepsilon}
\def\ssu{\subset}
\def\<{\langle}
\def\>{\rangle}
\def\Z{ {\text {\rm Z} } }
\def\Q{{\text {\rm Q} } }
\def\0{{\mathbf 0}}
\def\.{\hskip.06cm}
\def\ts{\hskip.03cm}
\def\vol{{\text {\rm vol}}}
\def\conv{{\text {\rm {conv}} }}
\def\bx{{\textbf{x}}}
\def\by{{\textbf{y}}}
\def\poly{\textup{\textsf{P}}}
\def\FP{{\textup{\textsf{FP}}}}
\def\Z{\mathbb{Z}}
\def\R{\mathbb{R}}
\def\N{\mathbb{N}}
\def\Q{\mathbb{Q}}
\newcommand{\cj}[1]{\overline{#1}}
\def\a{\cj{a}}
\newcommand{\x}{\mathbf{x}}
\newcommand{\z}{\mathbf{z}}
\newcommand{\floor}[1]{\lfloor#1\rfloor}
\newcommand{\ex}{\exists\ts}
\def\nin{\noindent}
\newcommand{\sfrac}[2]{\small{\text{$\frac{#1}{#2}$}}}
\def\same{\equiv}
\def\NP{{\textup{\textsf{NP}}}}
\def\PSPACE{{\textup{\textsf{PSPACE}}}}
\def\sharpP{\textup{\textsf{\#P}}}
\def\KK{\om}
\def\LL{\sigma}
\newcommand{\problem}[1]{\textsc{#1}}
\newcommand{\problemdef}[3]{
\bigskip
\begin{tabular}{p{0.1\textwidth} p{0.8\textwidth}}
\multicolumn{2}{l}{\problem{#1}}\\
\textbf{Input:} & {#2} \\
\textbf{Decide:} & {#3}
\end{tabular}
\bigskip
}
\newcommand{\countingdef}[3]{
\bigskip
\begin{tabular}{p{0.1\textwidth} p{0.8\textwidth}}
\multicolumn{2}{l}{\problem{#1}}\\
\textbf{Input:} & #2 \\
\textbf{Output:} & #3
\end{tabular}
\bigskip
}
\renewcommand{\mod}[1]{
\;\, (\textup{mod} \; #1)
}
\def\cpl{\ts\backslash\ts}
\def\mmod{\;\text{mod}\;}
\title[On the number of integer points in convex polyhedra]
{On the number of integer points in \\ translated and expanded polyhedra}
\author[Danny Nguyen \and Igor Pak]{Danny Nguyen$^{\star}$ \and Igor~Pak$^{\star}$}
\thanks{\thinspace ${\hspace{-.45ex}}^\star$Department of Mathematics,
UCLA, Los Angeles, CA, 90095.
\hskip.06cm
Email:
\hskip.06cm
\texttt{\{ldnguyen,\ts{pak}\}@math.ucla.edu}}
\thanks{\today}
\begin{document}
\maketitle

\begin{abstract}
We prove that the problem of minimizing the number of integer points in
parallel translations of a rational convex polytope in $\rr^6$ is \NP-hard.
We apply this result to show that given a rational convex polytope $P\ssu \rr^6$,
finding the largest integer $t$ s.t.\ the expansion $tP$ contains fewer than $k$ integer points is also \NP-hard.
We conclude that the Ehrhart quasi-polynomials of rational polytopes
can have arbitrary fluctuations.
\end{abstract}

\vskip.7cm

\section{Introduction}

In integer and combinatorial optimization, many problems are
computationally hard when the dimension is unbounded. In fixed
dimensions, the situation is markedly different as many classical
problems become tractable. Notably Lenstra's algorithm for
\emph{Integer Programming}, and Barvinok's algorithm for
\emph{counting integer points} in finite dimensional rational polytopes
are polynomial.

In recent years, there has been a lot of work, including by the
authors, to show that many problems in bounded dimension remain
computationally hard as soon as one leaves the classical framework
(see below). This paper proves hardness of two integer optimizations
problems related to translation and expansion of rational polytopes
in bounded dimensions.

We then consider the problem of describing \emph{Ehrhart quasi-polynomials}
of rational polytopes.  These quasi-polynomials are of fundamental
importance in both discrete geometry and integer optimization,
yet they remain somewhat mysterious and difficult to study.
We apply our result to prove a rather surprising property: that
Ehrhart quasi-polynomials of rational polytopes can have arbitrary
\emph{fluctuations} of consecutive values (see below).

\smallskip

\subsection{Translation of polytopes}
Let $\nn=\{0,1,2,\ldots\}$.
The following problem was considered
by Eisenbrand and H\"{a}hnle in~\cite{EH}.

\problemdef{Integer Point Minimization (IPM)}
{$A \in \qqq^{m\times n}$, a rational polyhedron $Q \subset \rr^m$, $k \in \nn$.}
{$\exists \ts b \in Q$ \, s.t. \. $\#\{\bx\in \zz^n\ts{}:\ts{}A \bx\le b\} \ts \le \ts k$?}

\nin
\emph{Parametric polytopes} \ts $P_b:= \{x\in \rr^n\ts{}:\ts Ax\le b\}$ \ts
were introduced by Kannan~\cite{K1}, who gave a polynomial time
algorithm for \textsc{IPM} with $k=0$ and~$n$ bounded.
For larger fixed values $k$, Aliev, De Loera and Louveaux \cite{ADL} proved that \textsc{IPM} is also polynomial time by employing the
\emph{short generating functions} technique by Barvinok and
Woods~\cite{BW} (see also~\cite{B2,B3}).
The following problem is an especially attractive special case:

\problemdef{Polytope Translation}
{$A \in \qqq^{m\times n}$, \, $b \in \qqq^m$, \, $\vec v \in \qqq^n$,  and \ts $k \in \nn$.}
{$\exists \ts \la, \, 0\le \la \le 1$ \, s.t. \ts $\#\{\bx\in \zz^n\ts{}:\ts{}A(\bx- \la\vec v)\le b\} \ts \le \ts k$?}

\nin
In terms of parametric polytopes, this asks for a translation of the original polytope $P$ by $\la\vec v$ so that it has at most $k$ integer points.  \textsc{Polytope Translation}
is a special case of the \textsc{Integer Point Minimization} problem,
when $Q$ is $1$-dimensional.

\smallskip

Eisenbrand and H\"{a}hnle proved that the \textsc{Polytope Translation}
is $\NP$-hard for $n=2$ and $m$ unbounded:

\begin{theo}[\cite{EH}]\label{th:EH}
Given a rational $m$-gon \ts $Q \subset \R^{2}$,  minimizing \ts $|Q + \lambda \vec e_{1}|$ over $\lambda \in \R$ is
$\NP$-hard.
\end{theo}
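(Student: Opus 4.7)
The plan is to reduce, in polynomial time, from an $\NP$-hard number-theoretic problem (a variant of \problem{Subset-Sum}) to the decision version of minimizing $|Q + \lambda \vec e_1|$. The starting observation is the slicing identity: if $[l_i,r_i]$ denotes the horizontal cross-section of $Q$ at integer height $y=i$, then
\[
\bigl|\ts (Q + \lambda \vec e_1) \cap \Z^2 \ts\bigr| \myeq \sum_i \bigl(\lfloor r_i + \lambda \rfloor \ts-\ts \lceil l_i + \lambda \rceil + 1\bigr),
\]
which is a $\Z$-periodic step function of $\lambda$ whose summands depend only on $\lambda \mmod 1$ and on the fractional parts $\{l_i\}, \{r_i\}$.

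First, I would design $Q$ so that the strips at the informative integer heights are very thin (width $\ll 1/m$); each then contributes $0$ or $1$ lattice point, with contribution equal to $[\{\lambda + l_i\} \in J_i]$ for some prescribed interval $J_i \ssu [0,1)$. Minimizing $|Q + \lambda \vec e_1|$ thereby reduces to a simultaneous Diophantine avoidance problem: choose $\lambda \mmod 1$ so as to miss as many $J_i$ as possible. To encode an instance $(a_1,\ldots,a_n,T)$ of \problem{Subset-Sum}, I would take $l_i \myeq a_i/N$ for a common denominator $N$ of polynomial bit-length, and append a small block of witness strips at auxiliary heights that contribute exactly when the residue class of $\lambda$ corresponds to a subset summing to~$T$. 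The threshold $k$ in the decision version is then chosen so that ``$\min_\lambda |Q + \lambda \vec e_1| \le k$'' is equivalent to feasibility of the \problem{Subset-Sum} instance.

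The crux of the argument, and the step I expect to be the main obstacle, is realizing the prescribed sequence $l_i$ as the actual left boundary of a convex polygon: the map $i \mapsto l_i$ must be convex, so one cannot freely prescribe an arbitrary sequence of fractional parts. To bypass this rigidity I would place the informative strips at widely spaced heights $y_1 \ll y_2 \ll \cdots$ and pad the intermediate integer heights with cross-sections whose total contribution is constant in $\lambda$; the $O(n)$ boundary edges of $Q$ are given rational slopes tuned so that the prescribed $\{l_{y_j}\}$ are attained while preserving global convexity, with the right boundary handled symmetrically. The resulting $m$-gon has $m = O(n)$ rational vertices of polynomial bit-length, completing the reduction. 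The hard part is this joint tuning: the reduction source and the geometric construction have to be chosen together, so that the hard instances are representable as cross-section data of a single convex polygon.
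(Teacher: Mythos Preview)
This theorem is not proved in the present paper: it is quoted from Eisenbrand and H\"ahnle~\cite{EH} as background, and the paper's own work begins with Theorems~\ref{th:real_min} and~\ref{th:int_min}. So there is no in-paper proof to compare your proposal against; what follows is an assessment of the proposal on its own terms, together with a word on what~\cite{EH} actually does.

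Your slicing identity and the reduction to a $1$-periodic step function in~$\lambda$ are correct and are indeed the starting point in~\cite{EH}. But the reduction you sketch has two genuine gaps. First, the encoding of \textsc{Subset-Sum} is not spelled out and, as written, does not seem to go through: with thin strips at left endpoints $l_i=a_i/N$, the $i$-th contribution is an indicator $[\{\lambda+a_i/N\}\in J_i]$, so minimizing the sum asks for a residue $\lambda\mmod 1$ that avoids as many of the translated arcs $-a_i/N+J_i$ as possible. That is an arc-covering/avoidance problem on $\R/\Z$, not a subset-sum; nothing in the construction forces a \emph{sum} of the $a_i$ to appear, and your ``witness strips'' that would tie avoidance to the target $T$ are left unspecified. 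Second, the convexity workaround is more delicate than you indicate. A padding strip at an intermediate integer height contributes $\lfloor r+\lambda\rfloor-\lceil l+\lambda\rceil+1$, which is constant in $\lambda$ only if both $l$ and $r$ are integers. Forcing integer endpoints at all non-informative heights while hitting prescribed fractional endpoints $\{l_{y_j}\}$ at the informative heights is incompatible with convexity of a single polygon unless you allow the number of edges to blow up; the ``widely spaced $y_j$'' trick does not by itself produce constant-in-$\lambda$ padding.

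For orientation: the argument in~\cite{EH} does not go through \textsc{Subset-Sum}. It reduces from a simultaneous Diophantine approximation problem (in the spirit of Lagarias' $\NP$-hardness results), which meshes naturally with the arc-avoidance picture above, and it handles convexity by choosing the slopes of the polygon's edges to realize prescribed denominators directly. If you want to pursue your own route, the honest target is an $\NP$-hard \emph{covering} problem on $\R/\Z$, not \textsc{Subset-Sum}; otherwise, follow~\cite{EH}.
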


Here and everywhere below, $|P|$ denotes the number of integer points
in a polytope~$P$, and $\vec e_{1} = (1,0,\dots)$ is the standard first coordinate vector.
We prove a similar result for $n=6$ with a \emph{fixed}
number~$m$ of vertices.

\begin{theo}\label{th:real_min}
Given a rational polytope $P \subset \R^{6}$ with at most $64$ vertices,
minimizing $|P + \lambda \vec e_{1}|$ over $\lambda \in \R$ is $\NP$-hard.
\end{theo}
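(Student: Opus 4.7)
The plan is to reduce from Theorem~\ref{th:EH}. Given a rational $m$-gon $Q \subset \R^2$ of the kind produced in the Eisenbrand--H\"ahnle proof (for which minimizing $|Q+\mu \vec e_1|$ over $\mu \in \R$ is $\NP$-hard), I aim to construct a rational polytope $P \subset \R^6$ with at most $64$ vertices such that the minimum of $|P+\lambda \vec e_1|$ over $\lambda \in \R$ determines, and is determined by, the minimum of $|Q+\mu \vec e_1|$ over $\mu \in \R$. $\NP$-hardness of the first problem then transfers immediately to the second.

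The first step is to open up the polygon $Q$ coming out of Theorem~\ref{th:EH} and look at its combinatorial structure. The polygons arising there are sawtooth/staircase shapes whose $m = \textup{poly}(n)$ edges encode the $2^n$ subset-sums of some underlying Subset Sum or Partition instance of size $n$. The integer points of $Q$ therefore live on a low-complexity ``lattice pattern'' that does not really need $m$ edges to describe: it only needs $n$ independent pieces of combinatorial data. This is the slack one should try to exploit by moving to higher dimension.

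The heart of the construction is to realize $Q \cap \Z^2$ as the image, under an affine lattice-preserving map, of $P \cap \Z^6$ for some small-vertex-count $P \subset \R^6$. The natural candidate for $P$ is (a suitable slight modification of) a Cartesian product of six segments, which has exactly $2^6 = 64$ vertices; each of the six factors will carry a coordinate corresponding to a digit/block of a base-$B$ decomposition of the Subset Sum data, so that integer points of $P$ are in bijection with the $2^n$ subset-sum values that the sawtooth polygon $Q$ records. The first coordinate of $\R^6$ is arranged to correspond to the first coordinate of $\R^2$, so that the translation $\lambda \vec e_1$ in $\R^6$ projects to the translation $\mu \vec e_1$ in $\R^2$ with $\mu = \mu(\lambda)$ a computable affine function.

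The main obstacle is showing simultaneously that (i) the resulting $P$ has at most $64$ vertices, (ii) the lift-and-project bijection between $P \cap \Z^6$ and $Q \cap \Z^2$ is preserved under translation by $\lambda \vec e_1$, and (iii) no spurious integer points enter or leave the fiber as $\lambda$ varies. Point (iii) is the technical crux: it amounts to a digit-carry / coprimality argument forcing each integer point of $P + \lambda \vec e_1$ to correspond uniquely to an integer point of $Q + \mu(\lambda) \vec e_1$, with no collisions or escapees. Once this rigidity is established, the optimum values of $\lambda$ and $\mu$ match via the computable map $\mu = \mu(\lambda)$, and the hardness in Theorem~\ref{th:real_min} is a direct consequence of Theorem~\ref{th:EH}.
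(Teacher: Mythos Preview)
Your proposal has a genuine gap at its core. The plan is to lift the Eisenbrand--H\"ahnle $m$-gon $Q\subset\R^2$ to a $64$-vertex $P\subset\R^6$ via an affine lattice map so that $|P+\lambda\vec e_1|=|Q+\mu(\lambda)\vec e_1|$, but the mechanism you describe cannot produce this. If $P$ is (essentially) a product of six segments and translation is by $\lambda\vec e_1$, then only the first factor moves: $|P+\lambda\vec e_1|$ is just $|I_1+\lambda|$ times a constant, a step function in $\lambda$ with at most two values per period. That is far too simple to match $|Q+\mu\vec e_1|$, whose complicated dependence on $\mu$ is exactly what carries the $\NP$-hardness. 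Saying ``a suitable slight modification'' of the cube does not rescue this; you would need to say \emph{what} modification couples the first coordinate to the others while keeping the vertex count at $64$, and nothing in the proposal does so. The ``digit/block of a base-$B$ decomposition'' idea is also underspecified: the Subset Sum instance underlying Theorem~\ref{th:EH} has $n$ items with $n$ unbounded, and six coordinates cannot carry $n$ independent binary choices. Your assertion that the integer points of $Q$ ``only need $n$ independent pieces of combinatorial data'' is precisely what must be turned into six pieces, and no such compression is offered.

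For contrast, the paper does \emph{not} reduce from Theorem~\ref{th:EH} at all. It starts instead from the Manders--Adleman problem \textsc{QDE} (three input integers $\al,\be,\ga$), rewrites its objective as a sum $T_1+\cdots+T_4$ of products of terms of the form $p\pm qt$ and $r\pm\lfloor t/\be\rfloor$, and encodes each factor as the integer-point count of an explicit trapezoid in $\R^2$. Products of factors become ``combinatorial cubes'' in $\R^{d+1}$ (Section~\ref{sec:prod}), and the four pieces are placed on distinct integer affine flats in $\R^6$ and convex-hulled; this is where the $60$ vertices come from. For the real-$\lambda$ version (Theorem~\ref{th:real_min}) the trapezoids are perturbed by $\delta$ so that counts are locally constant near each $t/N$, and an extra parallelogram $R$ with $4$ vertices is added as a penalty term that makes $|W'+\lambda\vec e_1|$ huge whenever $\lambda$ is away from the grid $\tfrac{1}{N}\Z$ (Lemma~\ref{l:real}). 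The total is $60+4=64$ vertices. The point is that the hard instance already has \emph{constant} combinatorial size (three integers), so there is no unbounded-$m$ polygon to compress; your route via Theorem~\ref{th:EH} imports exactly the unboundedness that the bounded-vertex statement is trying to eliminate.
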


This resolves a problem by Eisenbrand.\footnote{F.~Eisenbrand, personal
communication (September 2017).}
Since the dimension is fixed, the number of facets of $P$ is at most an explicit constant.
An integer version of this is:

\begin{theo}\label{th:int_min}
Given a rational polytope $P \subset \R^{6}$ with at most $60$ vertices
and an integer $N \in \N$,
minimizing \ts $|P + t \vec e_{1}/N|$ \ts over $t \in \Z$ is $\NP$-hard.
\end{theo}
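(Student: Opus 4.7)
My plan is to prove Theorem~\ref{th:int_min} by a direct \NP-hardness reduction that parallels (rather than invokes) Theorem~\ref{th:real_min}. Since the vertex bound here is \emph{smaller} than in Theorem~\ref{th:real_min}, we cannot simply reuse the polytope from the real version as input to the integer one; a dedicated construction is needed. The source of hardness will be a suitable \NP-complete arithmetic problem (e.g.\ a variant of \textsc{Subset Sum}) whose discrete choices can be encoded by integer translations of a single fixed $6$-polytope.

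The structural fact that motivates the passage to integer translations is that $f(\la) \myeq |P + \la\ts\vec e_{1}|$ is integer-valued, upper semicontinuous, and piecewise constant on $\rr$, with breakpoints precisely at those rationals $\la^\star$ where a facet of $P + \la^\star \vec e_1$ passes through a lattice point. Writing the facet inequalities of $P$ as $\bba_i\cdot \bx \le b_i$ with integer coefficients, each breakpoint has the form $\la^\star = (\bba_i\cdot \by - b_i)/(\bba_i)_1$ for some $\by\in\zz^{6}$, and so has denominator dividing a common integer $N$ computable from the input. Hence every value of $f$ over $\rr$ is already attained at $\la = t/N$ for some $t\in\zz$; integer translations with an appropriately chosen $N$ are expressive enough to witness the optimum.

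The core technical step is then to build, from an instance of the source problem, a rational polytope $P\ssu\rr^{6}$ with at most $60$ vertices together with an integer $N$ such that $\min_{t\in\zz} |P + t\vec e_1/N|$ encodes the answer. A natural starting point is to take $P$ as a modest modification of the $6$-cube $[0,1]^{6}$ (which has exactly $2^{6}=64$ vertices), with four of its vertices sliced off by one additional hyperplane whose defining equation carries the arithmetic of the source instance. The lattice points of $P + t\vec e_1/N$ should then be in explicit bijection with partial certificates of the source problem, and varying~$t$ should amount to scanning through candidate certificates, so that $|P + t\vec e_1/N|$ drops to the target threshold exactly when a valid certificate exists.

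I expect the main obstacle to be the tightness of the vertex budget: in $\rr^{6}$ with only $60$ vertices there is essentially no room for auxiliary gadget vertices, so all the arithmetic of the source instance must be embedded in the \emph{coefficients} of the facet inequalities rather than in extra vertices carrying auxiliary data. The correctness proof then reduces to a careful counting argument matching lattice points of $P + t\vec e_1/N$ to certificates of the source instance for the optimal~$t$; ensuring that this matching is exact while respecting the $60$-vertex bound is the delicate part of the construction.
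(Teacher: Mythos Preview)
Your proposal is not yet a proof: it is a hope that some modified $6$-cube will work, with no mechanism linking $|P+t\vec e_1/N|$ to the source problem. Two of the concrete suggestions are in fact unworkable. First, \textsc{Subset Sum} has unboundedly many input numbers; with a polytope of bounded face complexity in fixed dimension, you can only store $O(1)$ numbers in the facet coefficients, so you need a source problem that is already \NP-hard with $O(1)$ inputs. Second, slicing a vertex off the $6$-cube with a hyperplane does not decrease the vertex count: each deleted vertex is replaced by the intersections of the hyperplane with its six incident edges, so one cut near a vertex takes you from $64$ to $69$ vertices, not $63$. The $60$-vertex budget is not met this way.

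The paper's argument is quite different. It reduces from \textsc{Quadratic Diophantine Equations} (three input integers $\al,\be,\ga$), rewritten as minimizing a degree-$4$ expression in $t$ and $\lfloor t/\be\rfloor$ over $t\in[0,N)$ with $N=\be\ga$. The key technical step is to decompose that expression as a sum $T_1+T_2+T_3+T_4$ of \emph{positive} products of linear factors of the form $p\pm qt$ and $r\pm\lfloor t/\be\rfloor$. Each such linear factor is realized as $|F+t\vec e_1/N|$ for an explicit trapezoid $F\ssu\rr^2$; a product of $d$ factors is then realized by stacking the trapezoids over a common $x$-axis into a combinatorial $(d{+}1)$-cube in $\rr^{d+1}$. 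This yields polytopes $P_1,\dots,P_4$ in $\rr^5,\rr^3,\rr^4,\rr^2$ with $32,8,16,4$ vertices respectively. Finally one embeds them on disjoint integer affine flats in $\rr^6$ and takes the convex hull, so that the integer points of $W+t\vec e_1/N$ are the disjoint union of those of the pieces and $|W+t\vec e_1/N|=\sum_i T_i(t)$. The vertex count $32+8+16+4=60$ is where the bound comes from. None of these ideas --- the constant-size \NP-hard source, the trapezoid encoding of $\lfloor t/\be\rfloor$, the product-over-a-shared-axis construction, and the positivity rearrangement needed to make every piece a genuine count --- appears in your plan, and each is essential.
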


While Theorem~\ref{th:int_min} is implied by Theorem~\ref{th:real_min} by a simple argument on rationality, its proof is simpler and will be presented first (cf.\ Section~\ref{s:real}).
The technique differs from those in~\cite{EH} and our earlier
work on the subject.

To prove Theorem~\ref{th:int_min}, we show how to embed
a classical $\NP$-hard quadratic optimization problem into \textsc{Polytope Translation}.
This is done by viewing each term in the quadratic objective as the
integer volume of a separate polygon in $\rr^2$, which are then merged
in a higher dimension into a single convex polytope (cf.~\cite{KannanNPC,shortPR}).
Let us mention that positivity and convexity
are major obstacles here, and occupy much of the proof.

\medskip

\subsection{Expansions of polytopes}
A \emph{quasi-polynomial} $p(t):\Z\to \Z$ is an integer function
$$
p(t) \. = \. c_0(t) \ts  t^d \ts + \ts  c_1(t) \ts  t^{d-1} \ts + \ts \ldots \ts + \ts  c_d(t),
$$
where $c_i(t)$, $0\le i \le d$, are periodic with integer period, and $c_{d}(t) \not\same 0$.
We call $d$ the degree of the quasi-polynomial.
For a rational polytope $P \subset \R^{n}$ of full dimension,
consider the counting function:
$$
f_{P}(t) \; \coloneqq \; \bigl|tP \cap \Z^{n}\bigr|.
$$
Ehrhart famously proved that $f_{P}(t)$ is a degree $d$ quasi-polynomial, called the
\emph{Ehrhart quasi-polynomial}, see e.g.~\cite[$\S$18]{B3}.
Furthermore, it is also known (and not hard to see) that $c_{0}(t) = \vol_{n}(P)$, or equivalently $f_{P}(t)\sim \vol_n(P) \ts t^{n}$.

Many interesting combinatorial problems can be restated in
the language of Ehrhart quasi-polynomials.
We start with the following classical problem:

\countingdef{Frobenius Coin Problem}
{$\cj\al = (\al_1,\ldots, \al_n) \in \nn^{n}$, \
  $\gcd(\al_{1},\dots,\al_{n})=1$.}
{$g(\a) \coloneqq \max \big\{ t \in \N \; : \; $ $\nexists \.
c_1,\ldots, c_n \in \nn$ \, s.t. \ts $t=c_1 \al_1+\ldots + c_n \al_n \big\}$.}

\nin
In other words, this  problem asks for the largest integer~$t$ that cannot
be written as a combination of the coins $\al_{i}$'s.  Such a $t$ exists by
the $\gcd(\cdot)=1$ condition.
Finding $g(\a)$ is an $\NP$-hard problem when the dimension $n$ is not bounded, see~\cite{RA-NP}.
For a fixed~$n$, Kannan proved that the problem can be solved in polynomial
time~\cite{K2,BW}.

\medskip

We can restate the \textsc{Frobenius Coin Problem} as follows.
Let
$$
\Delta_{\cj \al} \; \coloneqq \; \{\x \in \R^{n} \; : \; \cj \al \cdot \x = 1,\;
\x \ge 0\} \quad  \text{and}  \quad f_{\cj \al} \. \coloneqq \. f_{\Delta_{\cj \al}}\,.
$$
Then $f_{\cj \al}(t)$ counts the number of ways to write $t \ge 0$
as an $\N$-combination of the $\al_{i}$'s.  Thus, $g(\cj \al)$ is
the largest $t \ge 0$, such that \ts $f_{\cj \al}(t) = 0$.
Beck and Robins~\cite{BR} used this setting to consider the
following generalization:

\countingdef{$k$-Frobenius Problem}
{$\cj\al = (\al_1,\ldots, \al_n) \in \nn^{n}$,\
  $\gcd(\al_{1},\dots,\al_{n})=1$,\
$k \in \nn$.}
{$g(\al,k) \coloneqq \max \big\{ t \in \N \; : \; f_{\cj \al}(t) < \ts k \big\}$.}

\nin
In other words, the problem asks for the largest integer $t$ that cannot be represented
as a combinations of~$\al_i$'s in $k$ different ways.  Aliev, De Loera and Louveaux~\cite{ADL}
generalized Kannan's theorem to prove that for fixed $n$ and $k$ the
problem is still in~$\poly$.  Motivated by the above interpretation with the
simplex $\Delta_{\cj \al}$, they also considered the following generalization:

\countingdef{$k$-Ehrhart Threshold Problem ($k$-ETP)}
{A rational polytope $P \subset \R^{n}$ and  $k \in \nn$.}
{$g(P,k) \coloneqq \max \big\{ t \in \N \; : \; f_{P}(t) < \ts k \big\}$.}

\nin
For a polytope $P$, this asks for the largest $t$ so that $tP$ contains
fewer than $k$ integer points.  Again, when both $n$ and $k$ are fixed,
it was shown in~\cite{ADL} that this problem is in~$\poly$. However,
for varying $k$ we have:

\begin{theo}\label{t:Ehrhart}
The \textsc{$k$-ETP} is $\NP$-hard for rational polytopes $P \subset \R^{6}$
with at most $60$ vertices.
\end{theo}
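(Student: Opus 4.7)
The plan is to reduce from Theorem~\ref{th:int_min}. Given an instance $(P, N, k_0)$ of integer translation minimization on a polytope $P \subset \R^6$ with at most $60$ vertices, we must decide in polynomial time whether $\min_{t \in \Z} |P + (t/N) e_1 \cap \Z^6| < k_0$. Since integer shifts by $e_1$ preserve lattice point counts, the minimum is achieved over $t \in \{0, 1, \ldots, N-1\}$. I will construct, in polynomial time, a rational polytope $Q \subset \R^6$ with at most $60$ vertices, a threshold $k \in \N$, and a target integer $T$, such that the answer to the decision question ``is $g(Q, k) \ge T$?'' resolves the translation problem.

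The core idea is to exploit the periodic structure of the Ehrhart quasi-polynomial. If $Q$ is rational with denominator $N$, then $f_Q(t) = |tQ \cap \Z^6|$ is a quasi-polynomial of degree $6$ whose coefficients are $N$-periodic in $t$. I will design $Q$ so that
\[
f_Q(t) \; = \; \pi(t) \; + \; B(t \bmod N),
\]
where $\pi(t)$ is a polynomial of degree $6$ (the smooth bulk growth reflecting $\vol_6(Q)$) and $B \colon \Z/N\Z \to \Z_{\ge 0}$ is the periodic fluctuation. The construction will arrange, up to an additive constant independent of $r$, that $B(r) = |P + (r/N) e_1 \cap \Z^6|$. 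A natural candidate is to take $Q$ as a thin rational modification of $P$ in the $e_1$-direction---for instance a convex hull of $P$ with a suitably scaled and shifted copy of itself---so that the integer count in $tQ$ picks up the translation count of $P$ precisely inside its $N$-periodic coefficient.

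With such $Q$ constructed, choose the threshold $k$ just above $\pi(T) + k_0$ for an integer $T$ lying in a short window where $\pi(t)$ is nearly constant. Then for $t$ in this window, the inequality $f_Q(t) < k$ is equivalent to $B(t \bmod N) < k_0$. Consequently $g(Q, k) \ge T$ iff there exists a residue $r \in \Z/N\Z$ with $B(r) < k_0$, iff the original translation minimum is below $k_0$. This yields a polynomial-time reduction from Theorem~\ref{th:int_min} to the $k$-ETP, establishing $\NP$-hardness.

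The principal obstacle is realizing the desired periodic fluctuation $B$ while keeping the vertex count at most $60$ in $\R^6$: the vertex budget is too tight to allow a separate gadget for each residue class modulo $N$, so $Q$ cannot be built ``from scratch'' but must reuse the structure of $P$ essentially verbatim, adding only a few auxiliary vertices. Verifying the Ehrhart identity above then requires a direct analysis---either via Barvinok's unimodular cone decomposition of $Q$, or by an explicit slicing computation along the $e_1$-axis---showing that the $N$-periodic coefficient of $f_Q$ records exactly the translation counts $|P + (r/N) e_1 \cap \Z^6|$ of the underlying $P$. This Ehrhart calculation, together with a careful choice of the window around $T$ in which $\pi$ varies by less than $1$, is the technical heart of the argument.
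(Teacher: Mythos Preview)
Your starting point is right: one reduces from Theorem~\ref{th:int_min}, and the goal is to build a rational polytope $Q\subset\R^6$ with the same $60$ vertices whose Ehrhart function records the translation counts $|P+t\vec v|$. But the heart of the argument is missing, and the constructions you sketch would not work.

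First, the decomposition $f_Q(t)=\pi(t)+B(t\bmod N)$ with $\pi$ a genuine degree-$6$ polynomial is not how the reduction goes, and it creates a problem you do not resolve. Over any window of $N$ consecutive integers near $T$, a degree-$6$ polynomial with leading coefficient $\vol_6(Q)$ varies by order $\vol_6(Q)\,T^5 N$; for this to be below~$1$ you must make $\vol_6(Q)$ astronomically small, and you give no mechanism for that. Second, your suggested construction ``convex hull of $P$ with a suitably scaled and shifted copy of itself'' would roughly double the vertex count, so you cannot stay within $60$ vertices. Third, invoking Barvinok's cone decomposition or a slicing computation signals that you do not yet have a concrete $Q$.

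The paper's construction is far simpler and avoids all of this. After translating $P$ so that $\mathbf{0}\in P$, set
\[
Q \;=\; \tfrac{1}{M}\,P \,+\, \vec v, \qquad \vec v = \vec e_1/N,
\]
for a large multiple $M$ of $N$. This $Q$ has \emph{exactly} the same vertices as $P$ (just rescaled and shifted), so the vertex count stays at $60$. Now $(t+M)Q=\tfrac{t+M}{M}P+(t+M)\vec v$. For $0\le t<N$ this dilate differs from $P+(t+M)\vec v$ only by the factor $\tfrac{t+M}{M}\approx 1$; choosing $M$ large enough (polynomial in the data, governed by the distance from $P+t\vec v$ to the nearest external lattice point) forces the integer-point counts to agree. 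Since $N\mid M$ and $|P+t\vec v|$ is $N$-periodic, one gets
\[
f_Q(t+M)\;=\;|P+t\vec v|\qquad\text{for all }0\le t<N,
\]
an exact identity with no polynomial bulk $\pi$ to subtract. A binary search then converts the minimization into polynomially many threshold queries $g(Q,k)\ge M$. The step you are missing is precisely this ``shrink and shift'' trick; once you see it, no Ehrhart-theoretic machinery is needed.
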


It is an open problem whether the
\textsc{$k$-Frobenius Problem} is $\NP$-hard when $k$ is a
part of the input (see~$\S$\ref{ss:finrem-min}).

\medskip

\subsection{Fluctuations of the Ehrhart quasi-polynomial}  It is well
known that every quasi-polynomial \ts $p(t):\zz\to \zz$ \ts can be
written in the form:
\begin{equation}\label{eq:quasi}
p(t) \; = \; \sum_{i=1}^{r} \. \gamma_{i} \ts \prod_{j=1}^{n} \ts\bigl\lfloor\al_{ij}t + \be_{ij}\bigr\rfloor\ts,
\end{equation}
where \ts $\al_{i},\be_{i},\ga_{i} \in \Q$.
The smallest $n$ for which $p(t)$ is representable in this form is called the \emph{degree}
of~$f(t)$.
It is also known how to compute $f_{P}(t)$ in the from~\eqref{eq:quasi} efficiently
when~$n$ is fixed (see e.g.~\cite{VW}).

Not all $n$ quasi-polynomial arise as Ehrhart quasi-polynomials of full-dimensional polytopes $P \subset \R^{n}$.
For instance, $p(t) = 1 + t\floor{\frac{t}{2}} - t\floor{\frac{t-1}{2}}$ cannot
be an Ehrhart quasi-polynomial because $p(t) > 0$ for all $t$,
yet its leading term fluctuates between odd and even values of~$t$.
However, when restricted to finite intervals, every quasi-polynomial
can be realized as $f_{P}$ of a polytope $P$, in the following sense:

\begin{theo}\label{th:fluctuation} 
Let $N \in \N$ and $p : \Z \to \Z$ be a quasi-polynomial of the form~\eqref{eq:quasi},
with $\gamma_{i} \in \Z$, $\al_{ij},\be_{ij} \in \Q$ for $1\le i \le r$ 
and $1\le j \le n$.
Then there exists a rational polytope \ts $Q \subset \R^{d}$ \ts and integers \ts
$K,M \in \N$, such that:
$$
p(t) + K \. = \. f_{Q}(t + M) \quad \text{for every} \quad 0 \le t < N.
$$
Moreover, we have \ts $d = O(n + \lceil \log r \rceil)$, and polytope $Q$ 
has at most $r 4^{n+1}$ vertices. Here the vertices of $Q$ and the 
constants $K,M$ can be computed in polynomial time.
\end{theo}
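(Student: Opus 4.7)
The plan is to assemble $Q$ in dimension $d = O(n + \log r)$ as a Cayley-type convex hull of $r$ pieces $B_i$, one per term of $p(t)$, with positivity and shifts absorbed into the constants $K$ and $M$.

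\emph{Shift and positivity.} Choose $M \in \N$ to be a large common multiple of the denominators of the $\alpha_{ij}$'s, so that $\alpha_{ij} M \in \Z$ and every linear form $\alpha_{ij}(t+M) + \beta_{ij}$ is strictly positive for every $t \in [0, N)$. Then $\lfloor \alpha_{ij}(t+M) + \beta_{ij}\rfloor$ splits as $\alpha_{ij} M + \lfloor \alpha_{ij} t + \beta_{ij}\rfloor$, a non-negative integer. Pick $K \in \N$ large enough that each of the $r$ summands $\gamma_i \prod_j \lfloor \alpha_{ij}(t+M) + \beta_{ij}\rfloor + K/r$ is a non-negative integer on $[0,N)$; this is possible since each summand is bounded on the finite range. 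The task then reduces to realizing the resulting non-negative integer-valued function on $s \in [M, M+N)$ as $f_Q(s)$.

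\emph{Per-term realization.} For each $i$, I construct a rational polytope $B_i$ of dimension $2(n+1)$ with at most $4^{n+1}$ vertices whose Ehrhart function matches the $i$-th normalized summand on the range $[M, M+N)$. Since each factor $\lfloor \alpha_{ij} s + \beta_{ij}\rfloor$ counts integers in the interval $[1, \alpha_{ij} s + \beta_{ij}]$, the product counts lattice points in an axis-aligned box in $\R^n$ whose $j$-th side has that length. To realize this box as $sP$ for a fixed rational polytope, I pair each ``main'' coordinate with an auxiliary height coordinate, so that the factor $\lfloor \alpha_{ij} s + \beta_{ij}\rfloor$ becomes the lattice-point count of a rational trapezoid in $\R^2$ scaled by $s$; the trapezoid's shape uses the largeness and divisibility of $M$ (and the identity $\alpha_{ij} M \in \Z$) to encode the constant shift $\beta_{ij}$ via the auxiliary coordinate. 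Each trapezoid has at most $4$ vertices, so the product of $n$ such trapezoids, together with one extra $2$-dimensional factor used to build in the coefficient $\gamma_i$ (a segment of length $|\gamma_i|$ when $\gamma_i > 0$, or its complement inside a padding brick when $\gamma_i < 0$, contributing a constant to be absorbed into $K$), gives $B_i$ with at most $4^{n+1}$ vertices.

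\emph{Cayley combination.} Append $\ell = \lceil \log_2 r\rceil$ label coordinates and assign each $B_i$ a distinct vertex $b_i$ of a widely spaced sub-cube $\{0, L\}^\ell$ with $L$ much larger than $N$. Set $Q = \conv \bigcup_{i=1}^r (\{b_i\} \times B_i)$ in $\R^d$ with $d = 2(n+1) + \ell = O(n + \log r)$. After scaling by integer $s \in [M, M+N)$, any lattice point of $sQ$ has label coordinate that is a rational convex combination of the $sb_i$'s; the choice $L \gg N$ forces the only integer labels occurring in this range to be the $sb_i$'s themselves, so the fiber of $sQ$ over $sb_i$ equals $sB_i$ and $f_Q(s) = \sum_i f_{B_i}(s) = p(s-M) + K$. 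The main technical obstacle is exactly this last verification: controlling spurious lattice points over intermediate labels, and verifying that the $\beta_{ij}$-shift trick in Step 2 yields the claimed Ehrhart identity on the finite range. Both reduce to a quantitative choice of $L$ and $M$ in terms of $N$ and the bit-sizes of the input; since all constants, denominators, and polytope vertices then have polynomial bit-length, $Q$, $K$ and $M$ are computable in polynomial time.
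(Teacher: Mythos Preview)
Your proposal has two genuine gaps. First, the positivity step fails whenever some $\alpha_{ij}<0$: increasing $M$ then drives $\alpha_{ij}(t+M)+\beta_{ij}$ to $-\infty$, not to something positive, so you cannot force all floor factors to be nonnegative by a shift in~$t$. Relatedly, your handling of $\gamma_i<0$ via a ``complement inside a padding brick'' does not yield a convex polytope, and you give no construction whose Ehrhart count equals $K/r-|\gamma_i|\prod_j\lfloor\cdots\rfloor$ on the range. The paper deals with both sign issues at once, not by a shift in $t$, but by an algebraic identity that rewrites each product $h_{i1}\cdots h_{in}$ plus a large constant $3^{n-1}g^n$ as a sum over nonempty $S\subseteq[n]$ of products $\prod_{j\in S}(g\pm h_{ij})$; since $g$ dominates $|h_{ij}(t)|$ on $[0,N)$, every factor is now positive and can be encoded by a trapezoid.

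Second, and more fundamentally, your Cayley combination does not survive dilation. With $Q=\conv\bigcup_i(\{b_i\}\times B_i)$ and $b_i\in\{0,L\}^\ell$, scaling by $s$ sends the labels to $sb_i\in\{0,sL\}^\ell$, and the segment between any two such labels contains $\Theta(sL)$ integer points, over each of which the fiber of $sQ$ is a nonempty Minkowski combination of the $sB_i$'s. These fibers contribute many extra lattice points; taking $L\gg N$ does nothing to prevent this (already for $r=2$, $\ell=1$, $B_1=B_2=\{0\}$ one gets $|sQ|=sL+1$ rather than $2$). The paper avoids this by running the entire construction for \emph{translation} first: it builds $P$ with labels at fixed vertices of $\{0,1\}^{\lceil\log r\rceil}$ so that $|P+t\vec e_1/N|=p(t\bmod N)+K$, where the Cayley trick is valid because integer points of $\conv(S)$ for $S\subseteq\{0,1\}^\ell$ are exactly $S$. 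Only at the very end does it convert to dilation via $Q=\tfrac{1}{M}P+\vec e_1/N$ with $M$ a huge multiple of $N$, so that $(t+M)Q$ differs from $P+(t+M)\vec e_1/N$ by a dilation factor $(t+M)/M$ close enough to $1$ that no lattice point is gained or lost. This translation-then-rescale step is the missing idea in your outline.
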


Roughly, this theorems say that locally, Ehrhart quasi-polynomials
can fluctuate as badly as general quasi-polynomials.
In particular, we have:

\begin{cor}\label{c:univ}
For every sequence $c_{0},\dots,c_{r-1} \in \N$, there exists a polytope 
$Q \subset \R^{d}$ and $K,M \in \N$ such that:
$$
c_{i} + K \; = \; f_{Q}(i + M) \quad\text{for every}\quad 0 \le i < r.
$$
Moreover, we have \ts  $d = O(\log r)$ and polytope $Q$ has at most $O(r)$ vertices.  
Here the vertices of $Q$ 
and the constants $K,M$ can be computed in polynomial time.
\end{cor}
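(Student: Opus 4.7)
The plan is to reduce Corollary~\ref{c:univ} to Theorem~\ref{th:fluctuation} by designing a specific quasi-polynomial of the form~\eqref{eq:quasi} that interpolates the given sequence $c_0,\dots,c_{r-1}$ at its first $r$ integer arguments, and then invoking the theorem with $N = r$.

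First, I would construct an interpolating quasi-polynomial $p : \Z \to \Z$ with $p(i) = c_i$ for all $0 \le i < r$, keeping both the number of terms in~\eqref{eq:quasi} linear in $r$ and the number of floor factors per term equal to $1$. For each $0 \le i < r$, define the floor-based indicator
$$
\delta_i(t) \; := \; \Big\lfloor \frac{t-i}{r} \Big\rfloor \. - \. \Big\lfloor \frac{t-i-1}{r} \Big\rfloor.
$$
A short case check shows that $\delta_i(t) = 1$ when $t \equiv i \pmod{r}$ and $\delta_i(t) = 0$ otherwise; in particular, on $\{0,\dots,r-1\}$ the function $\delta_i$ is exactly the indicator of $\{i\}$. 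Setting
$$
p(t) \; := \; \sum_{i=0}^{r-1} \ts c_i \. \delta_i(t),
$$
we obtain $p(i) = c_i$ for $0 \le i < r$, and $p$ is already presented in the form~\eqref{eq:quasi} with $n = 1$, a total of $2r$ summands, integer coefficients $\ga_i = \pm c_i \in \Z$ (since $c_i \in \N$), and rational $\al_{ij}, \be_{ij}$.

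Next, I would invoke Theorem~\ref{th:fluctuation} on this $p$ with $N = r$ to obtain a rational polytope $Q \in \R^d$ and integers $K, M \in \N$ satisfying $p(t) + K = f_Q(t+M)$ for all $0 \le t < r$; substituting $t = i$ yields the claimed identity $c_i + K = f_Q(i+M)$. The quantitative part of the theorem, applied with $R := 2r$ terms and $n = 1$, gives $d = O(n + \lceil \log R \rceil) = O(\log r)$ and at most $R \cdot 4^{n+1} = 32 r = O(r)$ vertices. Polynomial-time computability of $Q, K, M$ is inherited from Theorem~\ref{th:fluctuation} once one notes that $p$ is built from the input sequence in polynomial time.

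The only point where the reduction could go wrong is the choice of interpolant: the vertex bound in Theorem~\ref{th:fluctuation} is exponential in $n$ and linear in the number of terms, so to hit both the $O(\log r)$ dimension and the $O(r)$ vertex count promised by the corollary, one must simultaneously achieve $n = O(1)$ and $O(r)$ terms. The floor-indicator construction accomplishes exactly this with $n = 1$ and $2r$ terms; more naive interpolation schemes (for example, products of floors mimicking a Lagrange-type expansion) would blow $n$ up and break the dimension bound.
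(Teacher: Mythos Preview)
Your proposal is correct and follows essentially the same approach as the paper: both construct the indicator quasi-polynomial $\sum_{i=0}^{r-1} c_i\bigl(\lfloor (t-i)/r\rfloor - \lfloor (t-i-1)/r\rfloor\bigr)$ and then apply Theorem~\ref{th:fluctuation} with $N=r$. Your version is simply more explicit about the bookkeeping (counting $2r$ terms with $n=1$ in the form~\eqref{eq:quasi} and reading off the resulting $O(\log r)$ dimension and $O(r)$ vertex bounds).
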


\begin{proof}
  Consider the degree $1$ quasi-polynomial
$$f(t) \, = \, \sum_{i=0}^{r-1} \. c_{i} \left(\left\lfloor\frac{t-i}{r}\right\rfloor - \left\lfloor\frac{t-i-1}{r}\right\rfloor \right).
$$
  Then $f(i) = c_{i}$ for $0 \le i < r$.
  Now we apply Theorem~\ref{th:fluctuation} to $f(t)$ with $N = r$.
\end{proof}

\medskip

\subsection{Brief historical overview}
\textsc{Integer Programming} (\textsc{IP}) asks
for given $A\in \qqq^{m\times n}$ and $b \in \qqq^m$, to decide whether
$$
\exists \ts \x\in \zz^n \ \. : \ \. A\ts\x \le b\ts.
$$
Equivalently, the problem ask whether a rational polytope contains
an integer point.

When $n$ is unbounded, this problem includes \textsc{Knapsack} as a
special case, and thus \NP-complete (see e.g.~\cite{GJ}).
For fixed~$n$, the situation is drastically different.
Lenstra~\cite{L} famously showed that \textsc{IP} is in~\poly,
even when $m$ is unbounded (see also~\cite{Schrijver}).
Barvinok \cite{B1} showed that the corresponding counting
problem is in~\FP, pioneering a new technique in this setting
(see also~\cite{B3,B4}).

The \textsc{Parametric Integer Programming} (\textsc{PIP}) asks
for a given $A\in \qqq^{m\times n}$, $B\in \qqq^{m\times \ell}$
and $b \in \qqq^m$, to decide whether
$$
\qquad
\forall \ts\by \in Q\cap \zz^\ell \quad \exists \ts\bx\in \zz^n \ \, : \ \, A\ts\bx \. + \. B\ts\by \, \le \,  b\ts,
$$
where $Q\ssu \qqq^\ell$ is a convex polyhedron given by $\ts K\ts\by \le u$, for some
$K \in \qqq^{\ell\times r}$, $u \in \qqq^r$.   Kannan showed that \textsc{PIP}
is in~\poly~(see also~\cite{ES,shortPR}).  In~\cite{K2}, Kannan used the \textsc{PIP}
interpretation to show that for a fixed number~$\ell$ of coins, the
\textsc{Frobenius Coin Problem} is in~\poly.  Barvinok and Woods~\cite{BW}
showed that the corresponding counting problem is in~\FP, but only when the
dimensions~$\ell$ and~$n$ are fixed (see also~\cite{W1}).


Although the above list is not exhaustive, most other problems in this area
with fixed dimensions are computationally hard, especially in view of our
recent works.
Let us single out one negative small-dimensional result.
We showed in~\cite{KannanNPC} that given two
rational polytopes \ts $P, Q \subset \R^3$, it is $\sharpP$-complete
to compute
$$\#\ts\bigl\{x \in \Z \  : \ \ex \z \in \Z^{2}\,, \quad (x,\z) \in P\setminus Q \bigr\}\ts.
$$
Note that the corresponding decision problem is a special case of \textsc{PIP},
and thus can be decided in polynomial time.  This elucidated the limitations of the
Barvinok--Woods approach (see also~\cite{shortGF,shortPR}).

\smallskip

The Frobenius problem and its many variations is thoroughly discussed in~\cite{RA},
along with its connections to lattice theory, number theory and convex polyhedra.
There are also some efficient practical algorithms for solving it, see~\cite{BHNW}.
The \textsc{$k$-Frobenius Problem}, also called the \emph{generalized Frobenius problem},
has been intensely studied in recent years, see e.g.~\cite{AHL,FS}.

\smallskip

Ehrhart quasi-polynomials become polynomials for integer polytopes, in which case there
is a large literature on their structure and properties (see e.g.~\cite{B3,B4}
and references therein).  We discuss integer polytopes in Section~\ref{s:int-pol}. 
A bounded number of leading coefficients of Ehrhart
quasi-polynomials in arbitrary dimensions can be computed in polynomial time~\cite{B-quasi}
(see also~\cite{B+}).  There is also some interesting analysis of the
periods of the coefficients $c_i(t)$, see~\cite{BSW,W2}.
It seems that fluctuations of Ehrhart quasi-polynomials have not been considered until now.

\medskip

\subsection{Notations}
As mentioned earlier, $|P|$ always denote the number of integer points
in a convex polytope $P\ssu\rr^n$.  We use $P+\vec w$ to denote translation
of $P$ by vector~$\vec w$.
The first coordinate vector $(1,0,\dots)$ is denoted by $\vec e_{1}$.

When the ambient space $\R^{n}$ is clear, we use $\{x_{i} = \xi_{i},\dots,x_{j} = \xi_{j}\}$
to denote the subspace with specified coordinates $x_{i}=\xi_{i},\dots,x_{j}=\xi_{j}$.
We write $f(t)\gg g(t)$ for $g(t)=o\bigl(f(t)\bigr)$ as $t\to \infty$.  Finally, we use the notations $\nn=\{0,1,2,\ldots\}$ and $\zz_+=\{1,2,\ldots\}$.

\bigskip

\section{Proof of Theorem~\ref{th:int_min}}\label{sec:int_min}

\subsection{General setup}
We start with the following classical problem:

\problemdef{Quadratic Diophantine Equations (QDE)}
{$\al,\be,\gamma \in \N$.}
{$\exists \ts u \in \N, \, 0\le u < \ga$ \, s.t. \ts $u^{2} \equiv \al \mod \be$?}

\nin
Manders and Adleman~\cite{MA} proved that QDE is \NP-complete
(see also~\cite[$\S$7.2]{GJ}).  Observe that the problem remains
\NP-complete when we assume $\al,\gamma < \be$,
Thus, the problem can be rephrased as the problem of minimizing
\begin{equation}\label{eq:QDE_restated}
f(u,v) \. \coloneqq \. (u^{2} - \al - \be v)^{2} \quad \text{over} \quad
(u,v) \in \textup{B} \cap \Z^{2}.
\end{equation}
where $\textup{B} = [0,\ga) \times [0,\be)$.
Indeed, we have $\min_{(u,v) \in \textup{B}} f(u,v) = 0$ if and only if the
congruence in \textsc{QDE} is feasible.

Let $N = \be\ga$.  The two variables $(u,v) \in \textup{B}$ can be encode
into a single integer variable $0 \le t < N$ by:
$$
u = \floor{t/\be} \quad\text{and}\quad v = t \mod{\be} = t - \be \floor{t/\be}.
$$
It is clear that each pair $(u,v) \in \textup{B} \cap \Z^{2}$ corresponds to such a unique $t \in [0,N-1]$ and vice versa.
So we can restate the problem as minimizing
$f \big( \floor{t/\be},  t - \be\floor{t/\be} \big)$ over $t \in [0,N)$.
Now we have:
\begin{align}
f \big( \floor{t/\be},  t - \be\floor{t/\be} \big) \;
=& \;\; \Big( \ts \floor{t/\be}^{2} - \al - \be \big(t - \be\floor{t/\be} \big) \ts \Big)^{2} \nonumber\\
=& \;\; \Big( \ts \floor{t/\be}\big(\floor{t/\be} + \be^{2}\big) - \big( \al + \be t \big) \ts \Big)^{2} \nonumber \\
=& \;\; \underbrace{\floor{t/\be}^{2} \big( \be^{2} + \floor{t/\be} \big)^{2}}_{T_{1}(t)}  \,+\,  \underbrace{\big( \al + \be t \big)^{2}}_{T_{2}(t)}  \,-\,  \underbrace{2 \floor{t/\be} \big(\be^{2} + \floor{t/\be} \big) \big(\al + \be t \big)}_{S(t)} \label{eq:T12}
\end{align}
Here we denote by $T_{1}(t),T_{2}(t)$ and $S(t)$ the three terms in the above sum.
First, we need to convert $-S(t)$ into a positive term.
Fix a large constant~$\LL$, say \ts $\LL := 10\be^{5}$ \ts will suffice for our purposes.
We have:
\begin{align}
- S(t) \;
=& \;\; -S(t) \,+\, 2 \be(\be^{2}+\be)(\al + \be t) \,-\, 2 \be(\be^{2}+\be)(\al + \be t) \,+\, \LL \,-\, \LL \nonumber\\
=& \;\; \Bigl[ \be(\be^{2}+\be) - \floor{t/\be} \bigl(\be^{2} + \floor{t/\be} \bigr) \Bigr] 2 (\al + \be t) \,+\, \LL \,-\, 2 \be(\be^{2}+\be)(\al + \be t)  - \LL \nonumber \\
=& \;\; \underbrace{\big( \be - \floor{t/\be} \big) \big( \be^{2} + \be + \floor{t/\be} \big)  (2\al + 2\be t)}_{T_{3}(t)} \,+\, \underbrace{\bigl[ \LL \,-\, 2 \be(\be^{2}+\be)(\al + \be t) \bigr]}_{T_{4}(t)}  \,-\, \LL. \label{eq:T34}
\end{align}
Thus,
$$f \big( \floor{t/\be},  t - \be\floor{t/\be} \big) \; = \; T_{1}(t) \,+\, T_{2}(t) \,+\, T_{3}(t) \,+\, T_{4}(t) \,-\, \LL.$$
Note that $T_{1}(t),\dots,T_{4}(t) > 0$ for $0 \le t < N$.
Let
$$g(t) \. := \. \LL \ts + \ts f \bigl( \floor{t/\be},  t - \be\floor{t/\be} \bigr).
$$
We can rephrase the original $\NP$-hard problem as the problem of computing the following minimum:
\begin{equation}\label{eq:gT}
\min_{0 \le t < N} g(t) \; = \; \min_{0 \le t < N} T_{1}(t) \,+\, \dots \,+\, T_{4}(t).
\end{equation}
Note that each function $T_{i}(t)$ is a product of terms of the form $p \pm q t$ or $r \pm \floor{t/\be}$ for some constants $p,q,r > 0$.
We encode each of these three types of functions as the number of integer points in some translated polytope.
From this point on, we assume that $0 \le t < N$, unless stated otherwise.


\subsection{Trapezoid constructions}\label{sec:trapezoid}
To illustrate the idea, we start with the simplest function $qt$ with  $q \in \Z_{+}$.
Let $\ep = 1/4N^{2}$ and $\vec v = \vec e_{1}/N = (1/N,0,\dots,0)$.
 Consider the following triangle:
$$
  \Delta \; = \; \big\{(x,y) \in \R^{2} \;:\; x,y \ge \ep,\,  qN(1-x) \ge y\big\}.
$$
(see Figure~\ref{f:Delta}).
Fix a line \ts $\ell := \{x=1\}$.
It is easy to see that the hypotenuse of $\Delta + t \vec v$ intersects $\ell$ at the point $y = qN t/N = qt$.
So we have $(\Delta + t \vec v) \cap \ell = [\ep, qt]$, and thus $|\Delta + t \vec v| = qt$.

\medskip

To encode a function $p + q t$ with  $p,q \in \Z_{+}$, we take $\Delta$ and extend vertically by a distance $p-\frac{1}{2}$ below the line $y=0$ to make a trapezoid $F_{A}$.  Similarly, to encode a function $p' - q t$ with $p' > qN$, we translate the hypotenuse of $\Delta$ up by $2\ep$, and then extend upward by $p'$ to get a trapezoid $F_{B}$ (see Figure~\ref{f:Delta}).  Formally, let:
$$\aligned
F_{A} \; & = \; \big\{(x,y) \in \R^{2} \;:\; 1 \,\ge\, x \,\ge\, \ep,\,  qN(1-x) \,\ge\, y \,\ge\, 1/2-p\big\} \ \ \text{and} \\
F_{B} \; & = \; \big\{(x,y) \in \R^{2} \;:\; 1 \,\ge\, x \,\ge\, \ep,\,  p' \,\ge\, y \,\ge\, qN(1-x) + 2\ep\}.
\endaligned
$$

\begin{figure}[hbt]
\vspace{-1em}
\begin{center}
\psfrag{ep}{$\ep$}
\psfrag{2ep}{$2\ep$}
\psfrag{O}{\scriptsize $O$}
\psfrag{1}{\scriptsize $1$}
\psfrag{slope}{\scriptsize slope = $qN$}
\psfrag{Trig}{$\Delta$}
\psfrag{ell}{$\ell$}
\psfrag{1/N}{\scriptsize$\frac{1}{N}$}
\psfrag{2/N}{\scriptsize$\frac{2}{N}$}
\psfrag{...}{$\dots$}

\psfrag{FA}{$F_{A}$}
\psfrag{FB}{$F_{B}$}
\psfrag{height1}{\scriptsize $p-\frac{1}{2}$}
\psfrag{height2}{\scriptsize $p'$}

\epsfig{file=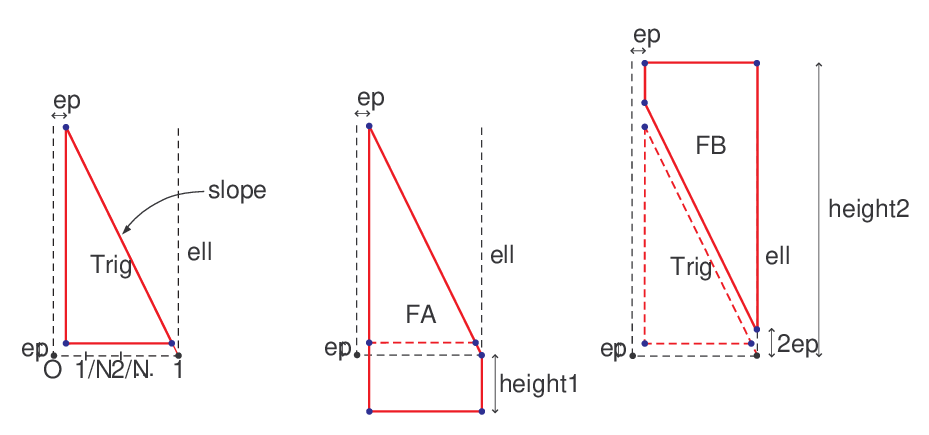, height=7cm}
\end{center}
\vspace{-1em}
\caption{The triangle $\Delta$ and trapezoids $F_{A}, F_{B}$. }
\label{f:Delta}
\vspace{-1.5em}
\end{figure}

\nin
Let us show that these trapezoids encode the function as stated above.
For $F_{A}$, we have $(F_{A} + t \vec v) \cap \ell = \big[\frac{1}{2}-p,\, q t\big]$, and thus $|F_{A} + t \vec v| = p + q t$.
For $F_{B}$, the hypotenuse of $F_{B} + t \vec v$ intersects $\ell$ at $qt + 2\ep$.
So we have $(F_{B} + t \vec v) \cap \ell = \big[qt + 2\ep,\, p'\big]$, and thus $|F_{B} + t \vec v| = p' - q t$ as desired.

\medskip


For the function $\floor{t/\be}$, we can encode it with the following triangle:
$$
\Delta' \; = \; \big\{(x,y) \in \R^{2} \;:\; x,y \ge \ep,\,  \ga(1-x) \ge y\big\}.
$$
(see Figure~\ref{f:Deltaprime}).
It is easy to see that the hypotenuse of $\Delta' + t \vec v$ intersects $\ell$ at the point $y=\gamma t / N = t/\be$.
So $(\Delta'+t\vec v) \cap \ell = [\ep, t/\be]$ and thus $|\Delta'+t \vec v| = \floor{t/\be}$.

\medskip

By modifying $\Delta'$ and keeping the same slope $\ga$, we can encode the functions
\ts $r + \floor{t/\be}$ \ts and \ts $r' - \floor{t/\be}$ \ts with \ts
$r,r' \in \Z_{+}$, $r' > \ga$, by using the following trapezoids:
$$\aligned
F_{C} \; & = \; \big\{(x,y) \in \R^{2} \;:\; 1 \,\ge\, x \,\ge\, \ep,\;  \ga(1-x) \,\ge\, y \,\ge\, 1/2-r\big\} \ \ \text{and}\\
F_{D} \; & = \; \big\{(x,y) \in \R^{2} \;:\; 1 \,\ge\, x \,\ge\, \ep,\;  r' \,\ge\, y \,\ge\, \ga(1-x) + 2\ep \big\},
\endaligned
$$
respectively (see Figure~\ref{f:Deltaprime}).

\begin{figure}[hbt]
\vspace{-2.5em}
\begin{center}
\psfrag{ep}{$\ep$}
\psfrag{2ep}{$2\ep$}
\psfrag{O}{}
\psfrag{1}{}
\psfrag{slope}{\scriptsize slope = $N/\be = \ga$}
\psfrag{Trig}{$\Delta'$}
\psfrag{ell}{$\ell$}

\psfrag{slope2}{\scriptsize slope = $N/\be = \ga$}
\psfrag{slope3}{\scriptsize slope = $\ga$}
\psfrag{FC}{$F_{C}$}
\psfrag{FD}{$F_{D}$}
\psfrag{height2}{\scriptsize $r-\frac{1}{2}$}
\psfrag{height3}{\scriptsize $r'$}
\psfrag{Del'}{$\Delta'$}
\psfrag{ell}{$\ell$}

\epsfig{file=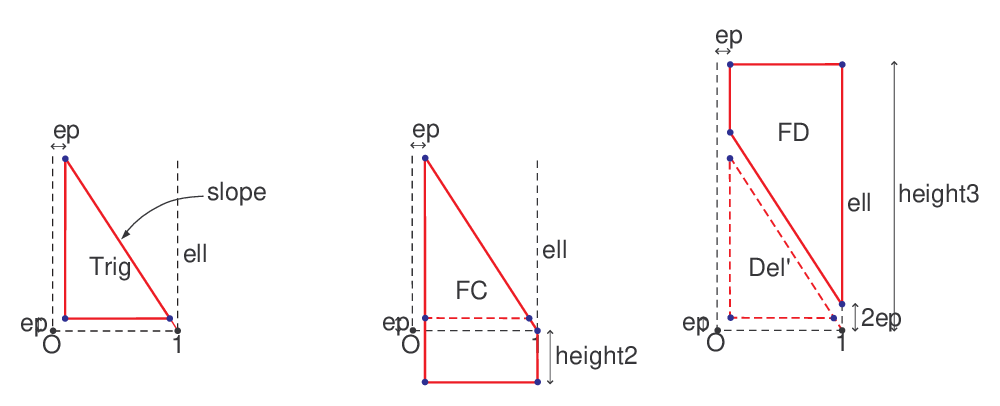, height=6cm}
\end{center}
\vspace{-1em}
\caption{The triangle $\Delta'$ and trapezoids $F_{C},F_{D}$.}
\label{f:Deltaprime}
\vspace{-1em}
\end{figure}

\bigskip

Let us show that these trapezoids encode the function as stated above.
For $F_{C}$, we have $(F_{C} + t \vec v) \cap \ell = \big[\frac{1}{2}-r,\, \frac{t}{\be}\big]$, and thus $|F_{C} + t \vec v| = r + \floor{t/\be}$.
Similarly, for $F_{D}$, the hypotenuse of $(F_{D} + t \vec v)$ intersects $\ell$ at $y = t/\be + 2\ep$, and thus $(F_{D} + t \vec v) \cap \ell = \big[\frac{t}{\be} + 2\ep,\, r']$.
Since $t/\be < t/\be + 2\ep < (t+1)/\be$, we have $|F_{D} + t \vec v| = r' - \floor{t/\be}$, as desired.

\medskip

Note that the counting function for each constructed trapezoid is periodic modulo~$N$.
In other words, \ts $|F_{A} + t \vec v| = |F_{A} + (t \text{ mod } N) \ts \vec v|$ for every $t \in \Z$,
and the same result holds for $F_{B},F_{C},F_{D}$.
From this point on, we let $t$ take values over $\Z$ in place of our
earlier restriction \ts $t\in [0,N)$.

\subsection{The product construction}\label{sec:prod}
The next step is to construct polytopes that encode products functions of the form \ts
$p \pm q t$ \ts and \ts $r \pm \floor{t/\be}$.

Consider any $d$ functions $h_{1}(t),\dots,h_{d}(t)$ of these forms.
We take the trapezoids $F_{1}, \dots, F_{d}$ whose counting functions encode $h_{i}$'s.
Each $F_{i} \subset \R^{2}$ is described by a system:
$$F_{i} \; = \; \big\{ (x,y) \in \R^{2} \; : \; \mu_{i} \le x \le \nu_{i},\; \rho_{i} + \tau_{i} x \le y \le \rho_{i}' + \tau_{i}'x \big\}.$$
We embed $F_{i}$ into the 2-dimensional subspace spanned by coordinates $x,y_{i}$ inside $\R^{d+1}$ (with coordinates $x,y_{1},\dots,y_{d}$).
Then define:
\begin{equation}\label{eq:product}
P \;=\; \big\{\ts(x,y_{1},\dots,y_{d}) \in \R^{d+1} \;:\; \max_{1 \le i \le d}\mu_{i} \le x \le \min_{1 \le i \le d} \nu_{i},\;
 \rho_{i} + \tau_{i} x \le y_{i} \le \rho_{i}' + \tau_{i}'x
\ts \big\}.
\end{equation}
It is clear that for every $t$ and every vertical hyperplane $H = \{x=x_{0}\}$ in $\R^{d+1}$, we have $(P + t \vec v) \cap H = \big((F_{1} + t \vec v\big) \cap H) \times \dots \times \big((F_{d} + t \vec v) \cap H\big)$.\footnote{Note that each $F_{i} + t \vec v$ intersects exactly one such hyperplane $H$ with $x_{0} \in \Z$.}
Therefore, we have $$|P \cap t \vec v| \;=\; |F_{1} \cap t \vec v| \dots |F_{d} \cap t \vec v| \;=\; h_{1}(t) \dots h_{d}(t).$$
So the $(d+1)$-dimensional polytope $P$ encodes the product $h_{1}(t) \dots h_{d}(t)$.
Note that $P$ is combinatorially a cube, which means it has $2(d+1)$ facets and $2^{d+1}$ vertices.

\subsection{Putting it all together}\label{s:embed_argument}
We apply this product construction to each of the four terms $T_{1},T_{2}$ in~\eqref{eq:T12},
$T_{3},T_{4}$ in~\eqref{eq:T34} and get four polytopes
$P_{1} \in \R^{5}$, $P_{2} \in \R^{3}$, $P_{3} \in \R^{4}$, $P_{4} \in \R^{2}$ such that
\begin{equation}\label{eq:P_iT_i}
|P_{i} + t \vec v| \. = \. T_{i}(t \mmod N) \quad \text{for every $t \in \Z$.}
\end{equation}
Now we embed them into $\R^{6}$ as follows:
\begin{equation}\label{eq:Q_i}
\aligned
Q_{1} \; &= \; \{\x \in \R^{6} \; : \; (x_{1},\dots,x_{5}) \in P_{1} ,\;  x_{6}=1 \},\\
Q_{3} \; &= \; \{\x \in \R^{6} \; : \; (x_{1},\dots,x_{4}) \in P_{3} ,\;  x_{5}=1,\; x_{6}=0 \},\\
Q_{2} \; &= \; \{\x \in \R^{6} \; : \; (x_{1},\dots,x_{3}) \in P_{2} ,\;  x_{4}=1,\; x_{5}=0,\; x_{6}=0 \},\\
Q_{4} \; &= \; \{\x \in \R^{6} \; : \; (x_{1},x_{2}) \in P_{4},\;  x_{3}=1,\; x_{4}=0,\; x_{5}=0,\; x_{6}=0 \}.
\endaligned
\end{equation}
Define the polytope
\begin{equation}\label{eq:W}
W = \conv(Q_{1},\dots,Q_{4}).
\end{equation}
First, note that $Q_{1},\dots,Q_{4}$ are disjoint. They also have the property that for every $t \in \Z$:
$$
(W + t \vec v) \cap \Z^{6} \; = \; \bigsqcup_{i=1}^{4} \Big(\ts (Q_{i} + t \vec v) \cap \Z^{6} \ts\Big).
$$
To see this, consider some lattice point $\z = (z_{1},\dots,z_{6}) \in (W + t \vec v) \cap \Z^{6}$.
Since $W$ is the convex hull of  $Q_{1},\dots,Q_{4}$, and each $Q_{i}$ sits in one of the two hyperplanes $\{x_{6}=1\}$, $\{x_{6}=0\}$, we must have $z_{6}=1$ or $z_{6}=0$.
This means $\z \in (Q_{1}+t\vec v)$ or $\z \in (\conv(Q_{2},Q_{3},Q_{4}) + t\vec v)$.
Assume the latter case, then we continue considering the coordinate $z_{5}$.
By a similar argument, we must have $\z \in (Q_{3} + t \vec v)$ or $\z \in (\conv(Q_{2},Q_{4}) + t\vec v)$.
For the latter case, the coordinate $z_{4}$ should finally tell us either $\z \in (Q_{2} + t \vec v)$ or $\z \in (Q_{4} + t \vec v)$.

Thus, for every $t \in \Z$, we have:
$$|W + t \vec v| \; = \; \sum_{i=1}^{4} \. |Q_{i} + t \vec v| \;=\;
 \sum_{i=1}^{4} \. |P_{i} + t \vec v| \;=\; \sum_{i=1}^{4} \. T_{i}(t \mmod N) \; = \; g(t \mmod N).$$
By ~\eqref{eq:gT}, we conclude that computing the following minimum is $\NP$-hard:
$$\min_{t \in \Z} \. |W + t \vec v| \; = \; \min_{0 \le t < N} \ts g(t).
$$
Note that the polytopes $Q_{1},Q_{2},Q_{3},Q_{4}$ have $32,8,16,4$ vertices, respectively.
Thus, the polytope~$W$ has in total $60$ vertices, and satisfies Theorem~\ref{th:int_min}. \hfill $\sq$

\bigskip

\section{Proof of Theorem~\ref{th:real_min}} \label{s:real}
We modify the construction in the proof of Theorem~\ref{th:int_min} by perturbing all its
ingredients to ensure that the desired minimum coincides with the one in the integer case.
This construction is rather technical and assumes the reader is familiar with
details in the proof above.

\smallskip

Recall that $0 \le \al,\ga < \be$, $N = \be\ga$, $\ep = 1/4N^{2}$ and $\vec v = \vec e_{1}/N$.
We perturb all constructed trapezoids as follows.
Denote by $s$ the maximum slope over all hypotenuses of all constructed trapezoids.
By a quick inspection of the terms $T_{1},\dots,T_{4}$ in~\eqref{eq:T12} and~\eqref{eq:T34},
one can see that \ts $s < 4 \be^{4} N < 4\be^{6}$.
Take $\de > 0$ much smaller than $\ep$ and $(\be\ts s)^{-1}$.
For example, $\de := 1/4\be^{8}$ works.
Now translate each constructed trapezoid $F$ by a distance $+\delta$ horizontally in $\R^{2}$.
Let~$F'$ be such a translated copy of some $F$.\footnote{Recall that each $F$ encodes some function $h(t)$ as $|F + t \vec v| = h(t \mmod N)$ for every $t \in \Z$.}
Then it is not hard to see that $|F + t \vec v| = |F' + t \vec v|$ for all $t \in \Z$.
In fact, due to the $\de$ perturbation, we have:
$$
|F + t \vec v| \; = \; |F' + t \vec v| \; = \; \Big|F' + \Big(\frac{t}{N} + \tau\Big) \vec e_{1}\Big|
$$
for every $t \in \Z$ and $\tau \in [-\de/4, \de/4]$.
This can be checked directly for all the trapezoid of types $F_{A},F_{B},F_{C}, F_{D}$
constructed in the proof of Theorem~\ref{th:int_min}.
Define the real set
\begin{equation}\label{eq:Z_de}
Z_{\de} \; = \; \Big\{\frac{t}{N} + \tau \; : \; t \in \Z,\, -\de/4 \le \tau \le \de/4 \Big\}.
\end{equation}
For $\lambda \in Z_{\de}$, denote by $t(\lambda)$ the (unique) integer $t$ such that $|\lambda - t/N| \le \de/4$.
By the above observations, we have $|F' + \lambda \vec e_{1}| = |F + t(\lambda) \vec v|$ for every $\lambda \in Z_{\de}$.
Now we take these perturbed trapezoids and construct $P_{1}', \dots, P_{4}'$ as similar to $P_{1},\dots,P_{4}$ above, using the same product construction (see~\eqref{eq:product}).
Note that $P'_{i} = P_{i} + \de \vec e_{1}$ and by~\eqref{eq:P_iT_i}, for every $\lambda \in Z_{\de}$ we have:
\begin{equation}\label{eq:P'_iP_i}
|P'_{i} + \lambda \vec e_{1}| \; = \; |P_{i} + t(\lambda) \vec v| \; = \; T_{i}(t(\lambda) \mmod N) \quad (1 \le i \le 4).
\end{equation}

\medskip

We need to ``patch up'' $Z_{\de}$ to make it the whole real line $\R$.
Let

\begin{equation}\label{eq:Y_de}
Y_{\de} \; = \; \Big\{\frac{t}{N} + \tau \; \, : \, \; t \in \Z, \ \frac{\de}{8} \le \tau \le \frac{1}{N}-\frac{\de}{8} \Big\}.
\end{equation}

\begin{figure}[hbt]
  \vspace{-1em}
\begin{center}
\psfrag{Z}{$Z_{\de}$}
\psfrag{Zprime}{$Y_{\de}$}
\psfrag{0}{$0$}
\psfrag{1}{$\frac{1}{N}$}
\psfrag{2}{$\frac{2}{N}$}
\psfrag{N-1}{$\frac{N-1}{N}$}
\psfrag{N}{$1$}
\psfrag{-1}{-$\frac{1}{N}$}
\psfrag{...}{$\dots$}
\psfrag{l2}{$\frac{\de}{4}$}

\epsfig{file=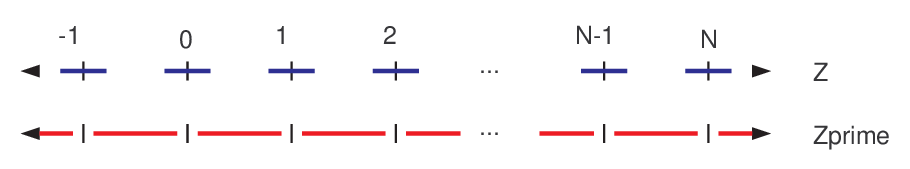, height=2cm}
\end{center}
\vspace{-1em}
\caption{The sets $Z_{\delta}$ and $Y_{\delta}$ consisting of bold segments.}
\label{f:ZZprime}
\end{figure}

\nin
It is clear that $Z_{\de} \cup Y_{\de} = \R$.
Take a large constant $\KK$, s.t. $\KK \gg g(t)$ for all $0 \le t < N$.
For example, $\KK := 10\be^{10}$ \ts will suffice for our purposes,
by~\eqref{eq:T12}--\eqref{eq:gT}.
Now consider the following parallelogram:
$$
R \; = \; \Big\{ (x,y) \in \R^{2} \; : \;  \KK{}N-\sfrac{1}{2} \ge y \ge 0,\; 1 - \frac{\de}{8} -\frac{y}{N} \ge x \ge 1 - \frac{1}{N} + \frac{\de}{8}  - \frac{y}{N}  \Big\}
$$
(see Figure~\ref{f:R}).

\begin{figure}[hbt]
\vspace{-1em}
\begin{center}
\psfrag{1}{$1$}
\psfrag{0}{$0$}
\psfrag{d1}{\scriptsize$\de/8$}
\psfrag{d2}{\scriptsize$1/N-\de/8$}
\psfrag{R}{$R$}
\psfrag{slope}{$\text{slope}\; \frac{y}{x} = N$}
\psfrag{h}{$\KK{}N - \frac{1}{2}$}

\epsfig{file=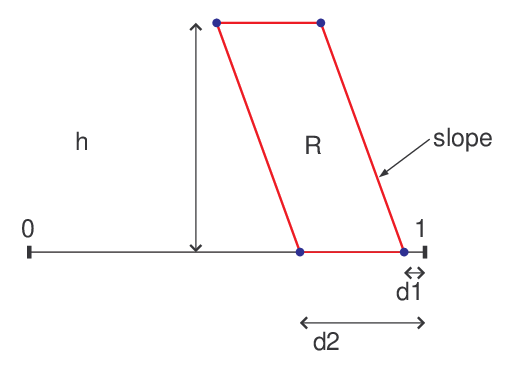, height=5cm}
\end{center}
\vspace{-1em}
\caption{The parallelogram $R$.}
\label{f:R}
\vspace{-.5em}
\end{figure}

\begin{lemma}\label{l:real}
We have: \ts $|R + \lambda \vec e_{1}| = \KK$ if $\lambda \in Y_{\de}$, and $|R + \lambda \vec e_{1}| = 0$ otherwise.
\end{lemma}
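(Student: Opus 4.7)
The plan is to reduce the integer-point count to a one-dimensional problem via the linear form $L(x,y) := x + y/N = (Nx + y)/N$. As $(x,y)$ ranges over $\Z^{2}$, the value $L(x,y)$ ranges over $\tfrac{1}{N}\Z$, since $Nx + y$ surjects onto $\Z$. Rewriting the defining inequalities of $R$, an integer point $(x,y)$ lies in $R + \lambda \vec e_{1}$ iff
$$
\lambda + 1 - \tfrac{1}{N} + \tfrac{\de}{8} \; \le \; L(x,y) \; \le \; \lambda + 1 - \tfrac{\de}{8}, \qquad 0 \le y \le \KK N - \tfrac{1}{2}.
$$
The $L$-interval has length $1/N - \de/4 < 1/N$, so it contains at most one element $k/N$ of $\tfrac{1}{N}\Z$.

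Next I would determine precisely when such a $k$ exists. Writing $\lambda = t/N + \tau$ with $t \in \Z$ and $\tau \in [0, 1/N)$, an elementary calculation shows that the $L$-interval contains a multiple of $1/N$ iff $\tau \in [\de/8, 1/N - \de/8]$, in which case $k = t + N$. By the definition~\eqref{eq:Y_de}, this is exactly the condition $\lambda \in Y_{\de}$. Hence $|R + \lambda \vec e_{1}| = 0$ whenever $\lambda \notin Y_{\de}$, giving one half of the lemma.

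For $\lambda \in Y_{\de}$, the integer points in $R + \lambda \vec e_{1}$ are in bijection with pairs $(x,y) \in \Z^{2}$ satisfying $Nx + y = t + N$ and $0 \le y \le \KK N - 1/2$. Since $\KK N \in \Z$, the latter constraint forces $y \in \{0, 1, \ldots, \KK N - 1\}$, and exactly $\KK$ of these satisfy the congruence $y \equiv t \pmod{N}$ required by the equation (one per residue-class block of size $N$), each determining a unique integer $x$. Hence $|R + \lambda \vec e_{1}| = \KK$. The only care needed is in tracking the closed/open boundaries: at the extreme values $\tau = \de/8$ and $\tau = 1/N - \de/8$ the multiple $k/N$ coincides with an endpoint of the $L$-interval, and one must check that the closedness of $R$ ensures these boundary values of $\lambda$ still contribute the full count $\KK$. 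This is the most delicate point, but reduces to a direct verification with the perturbation parameter $\de$.
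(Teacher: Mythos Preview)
Your proof is correct and follows essentially the same idea as the paper's. The paper argues slice by slice: it intersects $R$ with each horizontal line $\{y=i\}$, notes that successive slices satisfy $R_{(i+1)} = R_{(i)} - \vec e_{1}/N$, and counts which of the $\KK N$ slices contain an integer point for a given $\lambda$. Your use of the linear form $L(x,y) = x + y/N$ is exactly the observation that all these slices project to the same short interval in $\tfrac{1}{N}\Z$, so both arguments reduce to the same residue-class count on $y \in \{0,\dots,\KK N - 1\}$; they differ only in presentation.
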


\begin{proof}
Denote by $R_{(i)}$ the horizontal slice of $R$ at height $i \in \Z$.
Then for the bottom edge $R_{(0)}$,  we have  $|R_{(0)} + \lambda \vec e_{1}| = 1$ if $\de/8 \le \lambda \mmod 1 \le 1/N - \de/8$, and $|R_{(0)}+\lambda \vec e_{1}| = 0$ otherwise.
In other words, $|R_{(0)} + \lambda \vec e_{1}| = 1$ if and only if $\lambda$ lies in some $jN$-th segment of $Y_{\de}$ ($j \in \Z$).
Also every next slice is translated by $-1/N$, i.e., $R_{(i+1)} = R_{(i)} - \vec e_{1} / N$.
There are in total $\KK{}N$ non-empty slices, which implies the claim.
\end{proof}

\medskip

Recall the perturbed polytopes $P'_{1}, \dots, P'_{4}$ above, see~\eqref{eq:P'_iP_i}.
We embed them into $\R^{5}$ similarly to~\eqref{eq:Q_i}:
\begin{equation}\label{eq:Q'_i}
\aligned
Q'_{1} \; &= \; \{\x \in \R^{6} \; : \; (x_{1},\dots,x_{5}) \in P'_{1} ,\; x_{6}=1 \},\\
Q'_{3} \; &= \; \{\x \in \R^{6} \; : \; (x_{1},\dots,x_{4}) \in P'_{3} ,\; x_{5}=1,\; x_{6}=0 \},\\
Q'_{2} \; &= \; \{\x \in \R^{6} \; : \; (x_{1},\dots,x_{3}) \in P'_{2} ,\; x_{4}=1,\; x_{5}=0,\; x_{6}=0 \},\\
Q'_{4} \; &= \; \{\x \in \R^{6} \; : \; (x_{1},x_{2}) \in P'_{4},\; x_{3}=1,\; x_{4}=0,\; x_{5}=0,\; x_{6}=0 \}.\\
\endaligned
\end{equation}
We also embed $R$ into $\R^{5}$ as:
$$
Q'_{5} \; = \; \{\x \in \R^{6} \; : \; (x_{1},x_{2}) \in R,\; \quad  x_{3}=0,\; x_{4}=0,\; x_{5}=0,\; x_{6}=0 \}.
$$
Let $W' = \conv(Q'_{1}, \dots, Q'_{5})$. By the above embeddings, we have:
$$
|W' + \lambda \vec e_{1}| \; = \; \sum_{i=1}^{5} \. |Q'_{i} + \lambda \vec e_{1}|
\; = \; |R + \lambda \vec e_{1}| \;+\; \sum_{i=1}^{4} \. |P'_{i} + \lambda \vec e_{1}|.
$$
See Section~\ref{s:embed_argument} for an explanation on the additivity of the counting functions. Now if $\lambda \in Y_{\de}$, we have:
$$
|W' + \lambda \vec e_{1}| \; \ge \;  |R + \lambda \vec e_{1}| \; = \; \KK \; \gg \; \max_{0 \le t < N} \ts g(t).
$$
On the other hand, if $\lambda \notin Y_{\de}$, then $\lambda \in Z_{\de}$ \ts by~\eqref{eq:Z_de} and~\eqref{eq:Y_de}.
In this case, by~\eqref{eq:P'_iP_i} and Lemma~\ref{l:real}, we have:
$$
|W' + \lambda \vec e_{1}| \; = \; \sum_{i=1}^{4} \.
|P'_{i} + \lambda \vec e_{1}| \; = \; \sum_{i=1}^{4} \. T_{i}(t(\lambda) \mmod N)
\; = \; g(t(\lambda) \mmod N).
$$
We conclude that the following minimum is $\NP$-hard to compute:
$$
\min_{\lambda \in \R} \. |W' + \lambda \vec e_{1}| \; = \; \min_{0 \le t < N} \ts g(t).
$$
Note that the polytopes $Q'_{1},Q'_{2},Q'_{3},Q'_{4},Q'_{5}$ have
$32,8,16,4,4$ vertices, respectively.
Thus, polytope $W'$ has in total $64$ vertices.  This completes
the proof  of Theorem~\ref{th:real_min}.
\hfill $\sq$

\bigskip

\section{Applications}\label{s:app}

\subsection{Proof of Theorem~\ref{t:Ehrhart}}\label{s:Ehrhart_proof}
Recall from  Section~\ref{sec:int_min} the polytope $W \subset \R^{6}$ with 60 vertices and the translation vector $\vec v = \vec e_{1}/N$.
From now on, we refer to $W$ as $P$ (its intended role in Theorem~\ref{th:int_min}).
From the construction in Section~\ref{sec:int_min}, $P$ is a closed polytope containing at least one integer point, which we call $\vec p \in \Z^{n}$.
  We translate $P$ by $-\vec p$ so that $\vec 0 \in P$, meanwhile still keeping $|P + t\vec v|$ the same for every $t \in \Z$.

  Consider a very large multiple $M$ of $N$ (quantified later).
  For some $0 \le t < N$, consider the two polytopes
$$R_{t} = P + (t+M)\vec v \quad \text{and} \quad R'_{t} = \frac{t+M}{M}P + (t+M) \vec v$$
  First note that these are closed polytopes with $R_{t} \subset R'_{t}$.
  Also since $N | M$ and $N\vec v = (1,0,\dots)$, $R_{t}$ is just an integer translate of $P + t \vec v$.
  Thus, the distance from $R_{t}$ to its closest outer integer point is exactly the same as that for $P + t \vec v$.
So if $R'_{t}$ is only slightly larger than $R_{t}$, they should contain the same set of integer points, which is again an integer translate of the set $(P + t\vec v) \cap \Z^{n}$.
Therefore, if $M \gg N > t$, we should have $|R_{t}| = |R'_{t}|$.

To ensure  $|R_{t}| = |R'_{t}|$ for all  $0 \le t < N$, it suffices to have $N/M < d_{1}/D_{2}$, where:
$$d_{1} = \min_{0 \le t < N}\delta\big(P+t\vec v,\; \Z^{6} \cpl (P + t\vec v)\big) \quad \text{and} \quad D_{2} = \text{diameter of }P.$$
Here $\delta(\cdot,\cdot)$ denotes the shortest distance between $2$ sets.
Both $1/d_{1}$ and $D_{2}$ are polynomially bounded in $N$ and the largest $pq$ over all vertex coordinates $p/q$ of $P$ (see~\cite[Ch.10]{Schrijver}).
So $M$ only needs to be polynomially large in $N$ and the coordinates of $P$.

Now we have $|R_{t}| = |R'_{t}|$ for every $0 \le t < N$.
Let $Q = \frac{1}{M}P + \vec v$, then $R'_{t} = (t+M)Q$.
Thus, $|R_{t}| = |(t+M)Q|$ for every $0 \le t < N$.
Recall that $|P + t \vec v|$ is periodic modulo $N$ and $N | M$.
So $|R_{t}| = |P + (t+M)\vec v| = |P + t \vec v|$ for very $t$.
We conclude that
$$|P+t\vec v| = |(t+M)Q| \quad\text{for every}\quad 0 \le t < N.$$
Thus, computing $\min_{0 \le t < N} |(t+M)Q| = \min_{0 \le t < N} |P + t \vec v|$ is $\NP$-hard.

By binary search, finding $\min_{0 \le t < N}|(t+M)Q|$ is equivalent to deciding polynomially many sentences of the from $\min_{0 \le t < N}|(t+M)Q| < k$ for varying $k$.
From the definition of \textsc{$k$-ETP}, we have $\min_{0 \le t < N}|(t+M)Q| < k$ if and only if $g(Q,k) \ge M$.
This implies that computing $g(Q,k)$ is $\NP$-hard. \hfill $\sq$

\medskip

\subsection{Proof of Theorem~\ref{th:fluctuation}}
The constants $K,M$ will be later quantified.
Recall that
\begin{equation}\label{eq:pt}
\. p(t) \. = \. \sum_{i=1}^{r} \. \gamma_{i} \prod_{j=1}^{n} \bigl\lfloor\al_{ij}t + \be_{ij}\bigr\rfloor\ts
\end{equation}
with $\gamma_{i} \in \Z$.
By increasing $n$ by $1$ and writing $\gamma_{i} = \floor{0 t + \gamma_{i}}$, we can assume that all coefficients $\gamma_{i}=1$.
Let $\vec v = \vec e_{1}/N$.
First, we construct a polytope $W \subset \R^{d}$ such that $p(t \mmod N) + K = |W + t \vec v|$ for all $t \in \Z$.
We need a technical lemma:

\begin{lem}\label{l:alg_id}
For every $n \ge 2$,  we have the algebraic identity:
\begin{equation}\label{eq:alg_id}
  3^{n-1} g_{1} \cdots g_{n} + h_{1} \cdots h_{n} = \sum_{\substack{S \subseteq [n],\, S \neq \varnothing}} 3^{\delta(S)} \prod_{i \in [n] \cpl S}g_{i} \, \cdot \, \prod_{j \in S}(g_{j} \; + \sigma_{j}(S) \. \tau_{j}(S) \. h_{j})
\end{equation}
  where
  $$
  \sigma_{j}(S) = \Big\{
  \begin{matrix}
1 &\text{if} & j-1 \in S\\
-1 &\text{if} & j-1 \notin S
\end{matrix}
\; , \;\;
  \tau_{j}(S) = \Big\{
  \begin{matrix}
1 &\text{if} & \max(S) > j \\
-1 &\text{if} & \max(S) = j
\end{matrix}
  \; , \;\;
  \delta(S) = \max(0,n-\max(S)-1).
   $$
\end{lem}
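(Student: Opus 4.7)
The plan is to expand the right-hand side by distributing each factor in $\prod_{j \in S}(g_j + \sigma_j(S)\tau_j(S) h_j)$ over subsets $T \subseteq S$, and then to collect, for each fixed $T \subseteq [n]$, the coefficient $c_T$ of $\prod_{j \notin T} g_j \prod_{j \in T} h_j$. Swapping the two sums yields
\[
c_T \; = \; \sum_{\substack{S \supseteq T \\ S \ne \varnothing}} 3^{\sigma(S)}\prod_{j \in T}\sigma_j(S)\ts\tau_j(S),
\]
so the lemma reduces to three claims: $c_\varnothing = 3^{n-1}$, $c_{[n]} = 1$, and $c_T = 0$ for every other $T$.

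The first two are direct. Grouping $S \subseteq [n]$ by $s_{\max}(S) = k \in [n]$ (there are $2^{k-1}$ such $S$), the definition of $\sigma(S)$ gives
\[
c_\varnothing \; = \; 2^{n-1} \. + \. \sum_{k=1}^{n-1} 2^{k-1}\ts 3^{n-k-1} \; = \; 2^{n-1} \. + \. \bigl(3^{n-1}-2^{n-1}\bigr) \; = \; 3^{n-1}
\]
by a one-line geometric-series manipulation. For $c_{[n]}$, only $S = [n]$ contributes, with $\sigma([n]) = 0$; direct inspection shows $\sigma_1 = -1$ and $\sigma_j = 1$ for $j \ge 2$, while $\tau_j = 1$ for $j < n$ and $\tau_n = -1$, so the sign product equals $(-1)(-1) = 1$.

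The main obstacle is showing $c_T = 0$ for each $\varnothing \ne T \subsetneq [n]$. I would handle this by a case split according to the structure of $[n] \setminus T$. If there exists an index $p \in [n] \setminus T$ with $p+1 \in T$, then $S \mapsto S \triangle \{p\}$ is an involution on $\{S : S \supseteq T\}$ which preserves the weight and reverses the sign: since $p+1 \in T \subseteq S$ forces $s_{\max}(S) > p$, flipping $p$ leaves $s_{\max}(S)$, $\sigma(S)$ and every $\tau_j(S)$ with $j \in T$ unchanged, while it toggles $\sigma_{p+1}(S)$, which is the unique $\sigma_j$ with $j \in T$ affected by whether $p \in S$.

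The complementary case, where no such $p$ exists, is precisely $T = \{1, \dots, m\}$ with $1 \le m \le n-1$. Here I would compute $c_T$ directly: the summand $S = T$ has sign $+1$ and weight $3^{n-m-1}$, while for $S = T \cup A$ with $\varnothing \ne A \subseteq \{m+1, \dots, n\}$ the signs $\sigma_j, \tau_j$ simplify (since $s_{\max}(S) > m$ makes every $\tau_j(S) = +1$ for $j \in T$) so the sign product collapses to the constant $-1$, and the very same geometric-series identity used for $c_\varnothing$ yields $\sum_{\varnothing \ne A} 3^{\sigma(T \cup A)} = 3^{n-m-1}$. Therefore $c_T = 3^{n-m-1} - 3^{n-m-1} = 0$, completing the proof.
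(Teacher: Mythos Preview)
Your argument is correct. You expand the right-hand side as a polynomial in the $g_j$'s and $h_j$'s and verify the coefficient $c_T$ of each monomial $\prod_{j\notin T}g_j\prod_{j\in T}h_j$: the two extremal coefficients $c_\varnothing=3^{n-1}$ and $c_{[n]}=1$ fall out of the geometric series and a direct sign count, and the vanishing of the remaining $c_T$ is handled by the sign-reversing involution $S\mapsto S\triangle\{p\}$ (when some $p\notin T$ has $p+1\in T$) together with a second application of the same geometric series (when $T=\{1,\dots,m\}$). Every claimed invariance and sign flip checks out.

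This is a genuinely different route from the paper. The paper argues by induction on~$n$, using the algebraic identity
\[
3^{n-1}g_1\cdots g_n + h_1\cdots h_n \;=\; 3(g_1-h_1)\bigl(3^{n-2}g_2\cdots g_n - h_2\cdots h_n\bigr) + g_1\bigl(3^{n-2}g_2\cdots g_n + h_2\cdots h_n\bigr) + (g_1+h_1)\,3^{n-2}g_2\cdots g_n
\]
as the inductive step, and then (implicitly) matches the three pieces against the combinatorial description of the summands. That approach is shorter on the page but leaves to the reader the bookkeeping of how the signs $\sigma_j,\tau_j$ and the exponent $\delta(S)$ propagate through the recursion. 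Your approach trades that bookkeeping for a self-contained coefficient computation; the involution and the geometric-series cancellation make the vanishing of mixed terms transparent without needing to guess the right inductive splitting.
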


\begin{proof}
We show the indentity by induction. The base case $n=2$ can be easily checked:
\begin{equation}\label{eq:base_case}
3g_{1}g_{2} + h_{1}h_{2} = (g_{1}+h_{1})(g_{2}+h_{2}) + g_{1}(g_{2}-h_{2}) + g_{2}(g_{1}-h_{1}).
\end{equation}
Assume~\eqref{eq:alg_id} holds up to $n-1$, we show it for $n$.
First, by substituting $3^{n-2}g_{2} \dots g_{n}$ for $g_{2}$ and $h_{2} \dots h_{n}$ for $h_{2}$ in~\eqref{eq:base_case}, we have:
\begin{equation*}
\aligned
3^{n-1}g_{1} \dots g_{n} + h_{1} \dots h_{n} \; = \;\; &(g_{1}+h_{1})\underbrace{(3^{n-2}g_{2} \dots g_{n} + h_{2} \dots h_{n})}_{A} \; + \\
&g_{1}\underbrace{(3^{n-2}g_{2}\dots g_{n} - h_{2} \dots h_{n})}_{B} \, + \, \underbrace{3^{n-2} g_{2} \dots g_{n}(g_{1} - h_{1})}_{C}.\\
\endaligned
\end{equation*}
Now for term $A$, we directly apply~\eqref{eq:alg_id} for $n-1$ and variables $g_{2},\dots,g_{n}$, $h_{2},\dots,h_{n}$.
For term $B$, we change variable from $-h_{2}$ to $h_{2}$ and also apply~\eqref{eq:alg_id} with $n-1$.
Term $C$ corresponds to that in~\eqref{eq:alg_id} with $S = \{1\}$.
The sign functions $\sigma_{j}(S)$ and $\tau_{j}(S)$ can be understood as follows.
If $j = \max(S)$ then $h_{j}$ switches sign, just like $h_{1}$ in term $C$.
If $j-1 \in S$ then $h_{j}$ does not switch sign, just like $h_{2}$ in term $A$.
If $j-1 \notin S$ then $h_{j}$ does switch sign, just like $h_{2}$ in term $B$.
Finally, the function $\delta(S)$ corresponds to the power $3^{n-2}$ in term $C$.
\end{proof}

The point of Lemma~\ref{l:alg_id} is that if $q_{i}(t) = \prod_{j=1}^{n}h_{ij}(t)$, where $h_{ij}(t) = \floor{\al_{ij}t + \be_{ij}}$, and $g \in \N$ is big enough then we can write:
\begin{equation}\label{eq:sum}
q_{i}(t) + 3^{n-1} g^{n} = h_{i1}(t) \dots h_{in}(t) + 3^{n-1} g^{n} = \sum_{S \subseteq [n],\, S \neq \varnothing} 3^{\delta(S)} \ts g^{n - |S|} \ts \prod_{j \in S} \big(g \,\pm\, h_{ij}(t)\big).
\end{equation}
Now the trapezoid construction from Section~\ref{sec:int_min} can be applied to each term $g \pm h_{ij}(t)$.
In other words, for each $j$, we construct two trapezoids $F^{+}_{ij}$ and $F^{-}_{ij}$ so that:
$$|F^{+}_{ij} + t \vec v| = g + h_{ij}(t \mmod N) \quad \text{and} \quad |F^{-}_{ij} + t \vec v| = g - h_{ij}(t \mmod N) \quad \text{for every} \quad t \in \Z.$$
For each $S \subseteq [n]$ in the sum in~\eqref{eq:sum}, we take product of the trapezoids for the terms $g \pm h_{ij}(t)$ with the construction from Section~\ref{sec:prod}.
This results in some polytope $P'_{S}$ in $\R^{|S|+1}$ with $2^{|S|+1}$ vertices.
Then we take a prism of height $3^{\delta(S)} g^{n-|S|}$ over $P'_{S}$ to get a polytope $P_{S} \in \R^{|S|+2}$ with $2^{|S|+2}$ vertices such that:
$$
|P_{S} + t \vec v| \; = \; 3^{\delta(S)} g^{n-|S|} \prod_{j \in S} \big( g \,\pm\, h_{ij}(t \mmod N) \big) \quad \text{for every} \quad t \in \Z.
$$
By padding in extra dimensions, we can assume each $P_{S} \subset \R^{n+2}$.
To sum over all $S$ ($2^{n}-1$ of them), we pad in another extra $n$ dimensions, and augment each $P_{S}$ with the coordinates of a distinct point in $\{0,1\}^{n}$ similarly to~\eqref{eq:Q_i}.
The resulting polytopes $Q_{S} \subset \R^{2n+2}$ still satisfy $|P_{S}|=|Q_{S}|$.
We then take the convex hull of all $Q_{S}$ to get a polytope $W_{i} \subset \R^{2n+2}$ such that:
$$
|W_{i} + t \vec v| \; = \; \sum_{S \subseteq [n],\, S \neq \varnothing} 3^{\delta(S)} \ts g^{n - |S|} \ts \prod_{j \in S} \big( g \,\pm\, h_{ij}(t \mmod N) \big) \; = \; q_{i}(t \mmod N) + 3^{n-1}g^{n}.
$$
for ever $t \in \Z$.
See Section~\ref{s:embed_argument} for an explanation of the additivity in the counting functions.
Note that $W_{i}$ has at most $(2^{n}-1) 2^{n+2} < 4^{n+1}$ vertices.

Now we have a polytope $W_{i} \subset \R^{2n+2}$ for each term $q_{i}(t) = \prod_{j=1}^{n} \floor{\al_{ij}t + \be_{ij}}$ in~\eqref{eq:pt}.
Again, to sum up $q_{i}$ over $1 \le i \le r$, we pad each $W_{i}$ with $\lceil\log r\rceil$ extra dimensions and augment it with a distinct point in $\{0,1\}^{\lceil \log r \rceil}$.
Taking their convex hull, we get $P \subset \R^{d}$ such that
$$p(t \mmod N) + r 3^{n-1} g \; = \; |P + t \vec v| \quad \text{for every} \quad t \in \Z.$$
Here $d = 2n + 2 + \lceil \log r \rceil$ is the dimension, and $P$ has at most $r 4^{n+1}$ vertices.
In this construction, we only need $g > |h_{ij}(t)|$ for all $1 \le i \le r, 1 \le j \le n$ and $0 \le j < N$. So $g = 2 \lceil \max |\al_{ij}| N + \max |\be_{ij}| \rceil$ suffices.
We let $K = r3^{n-1}g$.

Finally, the argument from Section~\ref{s:Ehrhart_proof} can be applied to $P$.
 This gives a polytope $Q \subset \R^{d}$ (with the same number of vertices) and an $M \in \N$ so that:
$$
p(t) + K  \;=\;  |P + t \vec v| \;=\; |(t+M)Q| \;=\; f_{Q}(t+M) \quad \text{for every} \quad 0 \le t < N.
$$
This finishes the proof of Theorem~\ref{th:fluctuation}. \hfill $\sq$

\bigskip

\section{Integer polytopes} \label{s:int-pol}
While much of the paper deals with rational polytopes in fixed dimensions,
we can ask similar questions about \emph{integer polytopes}
(polytopes with vertices in $\zz^n$).

\begin{prop}
For integer polytopes, the \textsc{$k$-ETP} problem can be solved in polynomial
time.
\end{prop}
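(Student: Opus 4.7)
The plan is to compute the Ehrhart polynomial $f_P(t)$ explicitly and then binary-search for $g(P,k)$. First I would translate $P$ by $-\mathbf{v}$ for any integer vertex $\mathbf{v}$ of $P$; this preserves $|tP\cap \Z^n|$ for every $t\in \N$ and ensures $\mathbf{0}\in P$. By convexity, $t'P\subseteq tP$ whenever $0\le t'\le t$, so $f_P$ is monotonically non-decreasing on $\N$. This monotonicity is what makes binary search viable and is the key structural advantage of the integer case over the rational case.

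Since $P$ is an integer polytope and the dimension $n$ is fixed, $f_P$ is a genuine polynomial of degree $d=\dim P$ with positive leading coefficient $\vol_d(P)$. The main computational step is to extract the coefficients of $f_P(t)$ explicitly: using Barvinok's algorithm, compute $f_P(0),f_P(1),\ldots,f_P(d)$ in polynomial time, and Lagrange interpolate. The resulting coefficients are rational of polynomial bit-size in the input, so $f_P(t)$ can be evaluated at any integer $t$ given in binary in polynomial time by direct substitution.

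Finally, I would choose an integer $T\ge 1$ large enough that $\vol_d(P)\, T^d$ dominates $k$ together with the lower-order terms of $f_P$, so that $f_P(T)\ge k$ is guaranteed. Such a $T$ satisfies $\log T = O((\log k)/d + \text{poly}(\text{input}))$, polynomial in the input bit-size. Binary search over $[0,T]$, exploiting the monotonicity of $f_P$, locates the largest $t$ with $f_P(t)<k$ in $O(\log T)$ evaluations, each running in polynomial time. This is exactly $g(P,k)$, completing the algorithm.

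The main subtlety, rather than a genuine obstacle, is the reduction to the full-dimensional case: if $P$ is lower-dimensional, one works inside the affine span of $P$ together with the induced sublattice, recomputing $d=\dim P$ and $\vol_d(P)$ accordingly; this is standard once a vertex description is available. Everything else is a direct composition of known polynomial-time primitives in fixed dimension.
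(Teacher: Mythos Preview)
Your proposal is correct and follows essentially the same approach as the paper: compute the Ehrhart polynomial of the integer polytope by Lagrange interpolation (evaluating at $n+1$ points via Barvinok's algorithm), use monotonicity of $f_P$, and binary search for the threshold. Your write-up is more explicit than the paper's three-line proof---in particular, you justify monotonicity via the translation to $\mathbf 0\in P$ and spell out the upper bound $T$ and the lower-dimensional reduction---but the underlying argument is the same.
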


\begin{proof}
The Ehrhart polynomial $f_{P}(t)$ of an integer polytope $P \subset \R^{n}$
is a monotone polynomial of degree at most~$n$, see e.g.~\cite{B3}.  
Since $n$ is fixed, the coefficients of $f_{P}(t)$ can be computed 
using Lagrange interpolation.  Now apply the binary search to solve 
the \textsc{$k$-ETP} problem from definition. 
\end{proof}

Note that this approach also extends to (rational) polytopes $P$ with a fixed
\emph{denominator}, defined as the smallest $t \in \Z_{+}$ such that $tP$ is integer.

\smallskip

For \textsc{Polytope Translation}, we do not know if 
Theorem~\ref{th:real_min} continues to hold for integer polytopes.
However, it is not difficult to see that Theorem~\ref{th:int_min} extends 
to this setting:

\begin{theo}\label{th:int_min-2}
Given an integer polytope $P \subset \R^{6}$ with at most $64$ vertices
and an integer $N \in \N$,
minimizing \ts $|P + t \vec e_{1}/N|$ \ts over $t \in \Z$ is $\NP$-hard.
\end{theo}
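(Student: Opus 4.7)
The plan is to mimic the proof of Theorem~\ref{th:int_min}, replacing each rational trapezoid by an integer analog and absorbing the resulting boundary effects via an auxiliary polytope.

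First, for each rational trapezoid used in the proof of Theorem~\ref{th:int_min} we construct an integer analog with the same number of vertices. For example, replace $F_{A} = \{(x,y) : \epsilon \leq x \leq 1,\, 1/2 - p \leq y \leq qN(1-x)\}$ with
$$F_{A}^{\text{int}} \; = \; \{(x, y) : 0 \leq x \leq 1,\, 1 - p \leq y,\, qNx + y \leq qN\}.$$
This has four integer vertices $(0, 1-p)$, $(1, 1-p)$, $(1, 0)$, $(0, qN)$, and a direct calculation yields $|F_{A}^{\text{int}} + t \vec e_{1}/N| = p + qt$ for every $t \in \{1, \ldots, N-1\}$, matching the rational count. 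For $t \equiv 0 \pmod{N}$, extra lattice points on the $x = 0$ edge produce an inflated ``spike'' value $qN + 2p$. Analogous integer analogs $F_{B}^{\text{int}}, F_{C}^{\text{int}}, F_{D}^{\text{int}}$ handle the remaining trapezoid types, preserving vertex counts.

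Next, we feed these integer trapezoids into the product-and-convex-hull construction of Section~\ref{sec:int_min}, yielding an integer polytope $W^{\text{int}} \subset \R^{6}$. Because each integer trapezoid agrees with its rational counterpart on the single integer column $x_{1} = 1$ (the only integer value of $x_{1}$ in the shifted polytope for $0 < t < N$), the identity $|W^{\text{int}} + t\vec e_{1}/N| = g(t)$ holds for every $t \in \{1, \ldots, N-1\}$, where $g$ is the function from~\eqref{eq:gT}. For $t \equiv 0 \pmod{N}$, contributions from the $x_{1} = 0$ column (absent in the rational construction) produce an inflated count.

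The hard part is ensuring that this spike at $t \equiv 0 \pmod{N}$ dominates $g(t)$ for every $t \in \{1, \ldots, N-1\}$, uniformly over QDE instances; otherwise the overall minimum might be attained at $t \equiv 0 \pmod{N}$ and fail to solve the QDE. In certain small parameter regimes (e.g.\ when $\gamma$ is small relative to $\beta$) the natural boundary spike may not suffice, so we augment the construction with an auxiliary integer polytope---analogous in role to the parallelogram $Q'_{5}$ in the proof of Theorem~\ref{th:real_min}---having $4$ vertices and placed in a separate ``layer'' of $\R^{6}$ so that it contributes a very large count precisely when $t \equiv 0 \pmod{N}$ and is disjoint from the main polytope under every integer translation of $\vec e_{1}/N$. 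This brings the total vertex count from $60$ to at most $64$, matching the stated bound. The minimization over $t \in \Z$ then reduces to $\min_{1 \le t < N} g(t)$, which is $\NP$-hard by the QDE reduction of Theorem~\ref{th:int_min}. \hfill $\sq$
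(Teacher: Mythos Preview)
Your claimed identity $|W^{\text{int}} + t\vec e_1/N| = g(t)$ for all $t\in\{1,\ldots,N-1\}$ fails at the $F_D$-type trapezoids. With integer vertices the lower edge of $F_D$ becomes $y=\gamma(1-x)$ (the $2\varepsilon$ gone), and after translating by $t/N$ the sole integer column $x_1=1$ carries $y\in[t/\beta,\,r']$, hence $|F_D^{\text{int}}+t\vec v|=r'-\lceil t/\beta\rceil+1$. When $\beta\mid t$ this is $r'-\lfloor t/\beta\rfloor+1$, one larger than intended, and no integer shift of that edge corrects both the $\beta\mid t$ and $\beta\nmid t$ cases simultaneously (the hypotenuse of any integer trapezoid with slope $\gamma$ passes through lattice points precisely at these residues). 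The over-count propagates through the product for $T_3$, so $|W^{\text{int}}+t\vec v|>g(t)$ whenever $\beta\mid t$ with $0<t<N$, not only at $t\equiv 0\pmod N$. If the QDE instance is satisfiable only via some pair $(u,0)$, the true minimum $g(u\beta)=\sigma$ is never attained by your integer polytope, and the reduction returns the wrong answer.

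The paper's remedy is different and simpler than your auxiliary-polytope idea: the residues $t$ with $\beta\mid t$ are exactly those with $v=0$ in~\eqref{eq:QDE_restated}, and that case can be checked directly in polynomial time (is $\alpha$ a perfect square with root below~$\gamma$?). Once $v=0$ is ruled out, restricting attention to $\beta\nmid t$ is harmless because there the integer and rational trapezoid counts agree. Your auxiliary piece, which only spikes at $t\equiv 0\pmod N$, misses the remaining multiples of~$\beta$; you could retarget it to spike at all of them, but the direct check makes any such gadget unnecessary. (Incidentally, the natural boundary spike at $t\equiv 0\pmod N$ is already enormous from the extra column $x_1=0$, so even for that residue the added polytope is doing less work than you suggest.)
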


\begin{proof}[Sketch of proof]
The trapezoids in Section~\ref{sec:trapezoid} can be reused, with the $\ep$'s
removed to make all their vertices integer.\footnote{Those $\ep$'s only mattered
in Section~\ref{s:real}, where we say that small perturbation does not change
the number of integer points in the trapezoids.}  A small complication arises
for trapezoids of type $F_{D}$ in Figure~\ref{f:Deltaprime}, because now
\ts $|F_{D} + t \vec v| = r' - \floor{(t-1)/\be}$ \ts instead of \ts
$r' - \floor{t/\be}$. This is easily circumvented by considering only $t \in [0,N)$ s.t.\
$\be \nmid t$, and thus \ts $\floor{(t-1)/\be} = \floor{t/\be}$.
The remaining $t \in [0,N)$ with $\be | t$ can be ignored because they correspond
to $v = 0$ in~\eqref{eq:QDE_restated}, which can be checked directly.
\end{proof}

\smallskip

For the special case of \emph{integer polygons}, the number of integer points
vary quite nicely under translation (cf.~\cite{EH}).

\begin{prop}
For every fixed $m$, the \textsc{Polytope Translation} problem for integer $m$-gons
can be solved in polynomial time.
\end{prop}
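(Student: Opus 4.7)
The plan is to express $f(\la) := |P + \la \vec v|$ in closed form as a bounded-degree polynomial in $O(m)$ floor-function terms $\lfloor \al_j \la + \gamma_j \rfloor$, and then minimize via Lenstra's algorithm for integer programming in fixed dimension $O(m)$.

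First, I would note that since $P$ is an integer polygon, $|P + \vec w|$ depends only on $\vec w \bmod \zz^2$ and is piecewise constant on the torus $\tT^2$, with discontinuities confined to the $m$ closed geodesics corresponding to the edge directions of $P$.  In particular, $f(\la)$ is piecewise constant on $[0,1]$; however, the naive enumeration of its breakpoints can be exponential in the input size, since the trajectory $\la \vec v \bmod \zz^2$ crosses each geodesic $\Theta(|\vec v \cdot \vec n_i'|)$ times.  So we cannot afford to iterate over breakpoints directly.

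Next, by triangulating $P$ into $m - 2$ lattice triangles and applying Pick's theorem to each translated piece (or, more uniformly, the Khovanskii--Pukhlikov Euler--Maclaurin formula in dimension $2$), I would derive a closed form
\[
f(\la) \; = \; Q\bigl(\lfloor \al_1 \la + \gamma_1 \rfloor, \, \ldots, \, \lfloor \al_s \la + \gamma_s \rfloor\bigr),
\]
where $s = O(m)$, each $\al_j$ is the inner product of $\vec v$ with a primitive integer normal arising from an edge in the triangulation, $\gamma_j \in \qqq$ is computable in polynomial time from $P$, and $Q$ is a polynomial of total degree at most $2$ (matching the ambient dimension of $P$).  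For instance, for the triangle $\{x, y \ge 0, \; x + y \le N\}$ one obtains $Q = \binom{k_1 + k_2 + k_3 + 2}{2}$; for the square $[0, N]^2$ one gets $Q = (N + k_1 + k_2 + 1)(N + k_3 + k_4 + 1)$.

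Then, the minimization of $f$ over $\la \in [0,1]$ reduces to the mixed-integer program: find $\la \in [0,1]$ and $k_1, \ldots, k_s \in \zz$ satisfying $k_j \le \al_j \la + \gamma_j < k_j + 1$ for all $j$, minimizing $Q(k_1, \ldots, k_s)$.  For fixed $m$, this has constant dimension and bounded-degree polynomial objective, so it is solved in polynomial time by enumerating the $O(1)$ sign patterns of $\nabla Q$ (within each of which $Q$ is monotone in each coordinate, so its minimum is attained at a vertex of the feasible polyhedron) and invoking Lenstra's algorithm for integer programming in fixed dimension within each pattern.  The main obstacle will be to justify the closed-form expression with only $s = O(m)$ floor arguments and bounded-degree $Q$: the naive triangulation also introduces floor terms from internal diagonals and potential ``indicator'' corrections along shared edges, and one must verify via Pick's theorem that these combine cleanly into a single polynomial of the claimed form rather than producing unbounded degree or exponentially many cases.
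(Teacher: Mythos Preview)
There are two gaps.  The first is in deriving the closed form.  Pick's theorem does not apply to the translated triangles (their vertices are no longer lattice points), and the inclusion--exclusion across the internal diagonals produces exactly the uncontrolled ``indicator corrections'' you flag at the end; you have not shown these combine into a single bounded-degree~$Q$.  In fact the correct formula is \emph{linear}.  After a unimodular change reducing to $\vec v=\vec e_1$, slice $P$ by horizontal lines $y=j$ and sum edge by edge: on an edge of inverse slope $p/q$ (lowest terms) the fractional parts $\{(p/q)j\}$ cycle with period~$q$, and since both endpoints are lattice points $q$ divides the edge's vertical extent, so the sum over rows meeting that edge telescopes to a single term $b_i\lfloor q_i\lambda\rfloor$ plus a constant.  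This gives $f(\lambda)=a_0+\sum_i b_i\lfloor c_i\lambda\rfloor$ with at most $m$ floor terms.  Your sample formulas $\binom{k_1+k_2+k_3+2}{2}$ and $(N+k_1+k_2+1)(N+k_3+k_4+1)$ are dilation (Ehrhart) counts, not translation counts; under pure translation they collapse to linear expressions.

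The second gap is in the optimization.  Your plan ``enumerate sign patterns of $\nabla Q$; within each the minimum is at a vertex of the feasible polyhedron; then Lenstra'' does not work as stated.  In the feasible region each $k_j$ is \emph{uniquely} determined by $\lambda$ (the constraints $k_j\le\alpha_j\lambda+\gamma_j<k_j+1$ leave no slack), so monotonicity of $Q$ in the $k_j$'s buys nothing, and ``minimum at a vertex'' is an LP principle that does not carry over to integer feasible points.  Lenstra decides linear feasibility, not quadratic minimization.  With the linear formula above the repair is easy---binary search on the value, then mixed-integer feasibility in $O(m)$ variables via Lenstra---or one could cite integer polynomial optimization in fixed dimension~\cite{DHKW} in place of Lenstra.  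The paper itself takes a third route: it asserts the linear formula directly and then writes the bivariate generating function $\sum_{k=0}^{N-1} z^k w^{f(k/N)}$ as a short rational generating function in the Barvinok--Woods sense~\cite{BP,BW}, extracting the minimum by a projection.
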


\begin{proof}
Let $Q\ssu \rr^2$ be an integer $m$-gon.  Then \ts $f(\la):=|Q + \lambda \vec e_{1}|$
\ts is a sum of at most $m$ terms of the form \ts
$\bigl(a_{i} + b_{i} \floor{c_{i}\. \lambda}\bigr)$, for some $a_i,b_i,c_i\in\qqq$.  
Then the generating function
$$
F_{Q,N}(z,w) \. := \. \sum_{k=0}^{N-1} \. z^k \. w^{f(k/N)}
$$
can be written in the \emph{short GF form} (see~\cite{BP,BW}).
Here $1/N$ is a small enough refinement of the unit interval.
Then the short GF technique of taking projections can be applied
to $F_{Q,N}(z,w)$ to find the minimum of $f(k/N)$ in polynomial time.
We omit the details. \end{proof}

Curiously, Alhajjar proved in~\cite[Prop.~4.15]{Alh},
that for every integer polygon $Q\ssu \rr^2$, the corresponding maximization problem is trivial:
$$
|Q| \. > \. |Q+ \lambda \vec e_1|\ts,  \quad\text{for all} \ \, 0< \la< 1.
$$
This does not extend to $\rr^3$, however.  For example, take $\De\ssu \rr^3$
defined as the convex hull of points $(0,0,0)$, $(1,0,0)$, $(0,1,k)$ and $(1,-1,k)$.
Then $|\De|=4$, while $\bigl|\De+(1/2,0,0)\bigr|=k+1$, which is unbounded.

Finally, let us mention a large body of work on coefficients of the
\emph{$h^\ast$-vector} for the Ehrhart polynomials of integer polytopes.
This gives further restrictions on the values $f_Q(t)$ as in
Corollary~\ref{c:univ}.  We refer to~\cite{Braun} for a
recent survey article and references therein.

\bigskip

\section{Final remarks and open problems}\label{s:fin-rem}

\subsection{}\label{ss:finrem-min}
Now that  \textsc{Polytope Translation} is \NP-hard,
it would be interesting to know its true complexity.  First, it is clearly
in \PSPACE.  Also our proof is robust enough to allow embedding
of general polynomial optimization decision problems (cf.~\cite{DHKW}).
Although we were unable to find a more general optimization problem that fits our framework, we hope to return to this in the future.

Note that in computational complexity, counting oracles are extremely powerful,
as shown by Toda's theorem (see e.g.~\cite{AB,Pap}).  From this point
of view, our Theorem~\ref{th:real_min} is unsurprising, since it uses
a counting oracle in a restricted setting.

\medskip

\subsection{}
In another direction, it would be interesting to see if \textsc{Polytope Translation} remains \NP-hard in lower dimensions.
We believe that dimension~$6$ is Theorem~\ref{th:real_min}
is not sharp.

\begin{conj}  \label{conj:PT-simplex}
The \textsc{Polytope Translation}
problem for rational polytopes $P\ssu \R^3$ is \NP-hard.
\end{conj}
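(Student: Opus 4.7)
The plan is to reduce the ambient dimension in the construction of Theorem~\ref{th:real_min} from $6$ down to $3$. Three dimensions must be saved: one each from the product polytopes $P_1$ and $P_3$ of Section~\ref{sec:int_min} (which used $5$ and $4$ dimensions to encode products of four and three linear/step factors), and one from the convex-hull stacking in \eqref{eq:Q_i} that separates the pieces $Q_1,\ldots,Q_4$. Crucially, the conjecture places no bound on the number of vertices, so we may freely use polytopes with many facets, in the spirit of Eisenbrand and H\"ahnle's $\R^2$ result (Theorem~\ref{th:EH}).

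The central new ingredient I would pursue is a \emph{direct} encoding of quadratic functions of the translation parameter as lattice counts of single $3$-dimensional rational polytopes, thus bypassing the dimension-inflating product construction of Section~\ref{sec:prod}. Concretely, one can attempt to build ``stacked'' versions of the $2$D trapezoids from Section~\ref{sec:trapezoid}: a $3$-dimensional pyramid or wedge whose lattice count under translation by $t\vec e_1/N$ equals, say, $\lfloor t/\beta\rfloor^2$ or $c \lfloor t/\beta\rfloor(\beta^2 + \lfloor t/\beta\rfloor)$, because its integer horizontal slices contribute counts that are themselves linear in $t$, and these sum to a quadratic. Once each of $T_1, T_2, T_3$ from \eqref{eq:T12}--\eqref{eq:T34} has been realized by a single $3$D polytope, I would combine them by a single convex hull without adding a separating coordinate, either by placing their integer supports on disjoint $z$-layers inside $\R^3$, or by first reducing to a base problem whose encoding decomposes as a sum of just two products. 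The real-translation version would then follow by the same perturbation trick as in Section~\ref{s:real}, augmented with a parallelogram-like patch polytope for the off-grid translations.

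The main obstacle, and the reason this remains a conjecture, is the fine control required in the quadratic encoding. While a $3$D polytope translated by $t\vec e_1/N$ certainly gives lattice counts that are quasi-polynomial in $t$ of degree up to $3$ within one period, forcing this count to match exactly a prescribed product $u(t)\,v(t)$ of step functions, with the correct leading coefficient and all lower-order corrections cancelling, appears genuinely delicate: in $\R^6$ one gets separate control of each factor through the product construction, whereas in $\R^3$ all factors are intertwined in the geometry of a single polytope. A secondary obstacle is maintaining convexity of the outer hull while packing the pieces tightly into $\R^3$, which is far more restrictive than in $\R^6$. Should a direct quadratic encoding prove infeasible, a plausible fallback is to reduce instead from \textsc{Subset Sum} (or another pseudo-polynomial \NP-hard problem) by building a high-facet $3$D polytope whose translated count mimics the distance of a candidate sum from a target value---trading vertex count, which is unrestricted in the conjecture, against dimension.
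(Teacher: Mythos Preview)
The statement you are attempting to prove is a \emph{conjecture} in the paper, not a theorem; the paper offers no proof of it. It appears in Section~\ref{s:fin-rem} (Final remarks and open problems) precisely because the authors do not know how to establish it. So there is no proof in the paper for your proposal to be compared against.

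Your write-up is candid about this---you yourself say ``the reason this remains a conjecture''---so what you have produced is a research plan, not a proof. As a plan it is sensible in spirit: yes, the two evident sources of dimension growth in the Theorem~\ref{th:real_min} construction are the product construction of Section~\ref{sec:prod} and the convex-hull stacking of~\eqref{eq:Q_i}, and yes, the conjecture drops the vertex bound, which in principle allows Eisenbrand--H\"ahnle-style high-facet polygons. But the core step you propose, building a single $3$-dimensional polytope whose translated lattice count realizes a \emph{prescribed} product like $\lfloor t/\beta\rfloor(\beta^{2}+\lfloor t/\beta\rfloor)$ exactly, is not substantiated: you note that degree-$3$ quasi-polynomials arise from $3$D polytopes, but matching a specific quasi-polynomial with all lower-order terms correct is the entire difficulty, and you give no mechanism for it. Likewise, ``placing their integer supports on disjoint $z$-layers inside $\R^{3}$'' while keeping the convex hull from picking up extra integer points is exactly the obstruction the paper spends extra dimensions to avoid; asserting it can be done in $\R^{3}$ is the conjecture, not a step toward it. Until one of these two gaps is closed with an actual construction, the proposal remains a heuristic outline rather than a proof.
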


In the plane, the polygon translation problem (with a fixed number
of vertices) seem to have additional structures that prevent it from
being computationally hard.
In the special case of rational trapezoids,
it can be reduced to a Diophantine approximation problem of
unknown complexity (see the approach in~\cite{EH}).  We conjecture that
the polygon translation problem is intermediate between $\poly$ and $\NP$.

\medskip

Similarly, we believe that hardness still holds for much simpler types of polytopes:

\begin{conj}  For some fixed $n$, the \textsc{Polytope Translation}
problem for rational simplices $\De\ssu \R^n$ is \NP-hard.
\end{conj}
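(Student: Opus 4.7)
The plan is to reduce an $\NP$-hard Diophantine problem to \textsc{Polytope Translation} for a single rational simplex $\Delta\ssu \R^{n}$, in the spirit of the proof of Theorem~\ref{th:real_min}, but subject to the hard extra constraint that the target polytope have only $n+1$ vertices. Since the conjecture allows $n$ to be any fixed integer, I would let the dimension grow freely in the reduction and only at the end verify that it is bounded by a universal constant.

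First, I would keep the opening of the proof of Theorem~\ref{th:int_min}: start from \textsc{QDE}, re-encode it as the minimization of $g(t) = T_1(t)+\dots+T_4(t)$ over $t \in [0,N)$ as in~\eqref{eq:gT}, and aim to realize $g(t)$ (up to an additive constant) as $|\Delta + \lambda\vec e_1|$ for a single carefully chosen rational simplex. The obstacle is that the trapezoid and product gadgets from Sections~\ref{sec:trapezoid}--\ref{sec:prod} produce polytopes $P_i$ that are combinatorially cubes, and the Cayley-type convex hull~\eqref{eq:W} has $60$ vertices, far more than $n+1$.

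Next, I would look for a simplex-based gadget to replace each product-of-trapezoids polytope $P_i$. The natural candidate is the Frobenius-type simplex $\Delta_{\cj\al}$: integer points of $t\Delta_{\cj\al}$ count $\N$-representations of $t$ as a combination of $\al_1,\dots,\al_n$, and translating $\Delta_{\cj\al}$ in a direction transverse to the hyperplane $\{\cj\al\cdot x = 1\}$ produces a quasi-polynomial in $\lambda$ whose period and fluctuation are controlled by the $\al_i$'s. Tuning $\cj\al$ so that the count realizes one of the factors \ts $p\pm qt$ \ts or \ts $r\pm\lfloor t/\be\rfloor$ \ts appearing in~\eqref{eq:T12}--\eqref{eq:T34} would give a simplex analogue of the trapezoid construction. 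Products of such factors could then be encoded by replacing the cube-type product of Section~\ref{sec:prod} with a join of simplices, which is again a simplex.

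Finally, to assemble the four terms $T_1,\dots,T_4$ into a single simplex one must replace the Cayley convex hull~\eqref{eq:W} by something simplex-preserving. This is where I expect the main obstacle. An iterated join of the four gadgets keeps the simplex property but makes integer-point counts split multiplicatively rather than additively, so a direct join would compute $T_1T_2T_3T_4$ instead of $T_1+\dots+T_4$. Forcing additivity while preserving the simplex condition seems to require either a nontrivial identity between Ehrhart series of joins and Cayley polytopes, a lift that zeroes out the unwanted cross-terms on most lattice slices, or, most plausibly, abandoning the \textsc{QDE}-based starting point altogether in favor of a hard problem whose natural polyhedral certificate is already a simplex, such as a suitable generalization of the $k$-\textsc{Frobenius Problem} with $k$ a part of the input (see the open problem in $\S$\ref{ss:finrem-min}). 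Bridging the additive-versus-multiplicative mismatch is the step I expect to be decisive, and is why the conjecture remains open.
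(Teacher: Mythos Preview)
The statement you were asked to prove is a \emph{conjecture}, not a theorem: the paper does not supply a proof, and explicitly lists it among the open problems in Section~\ref{s:fin-rem}. So there is no ``paper's own proof'' to compare against.

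Your proposal is not a proof either, and you are candid about this. What you have written is a reasonable reconnaissance of the obvious line of attack --- reuse the \textsc{QDE} reduction and the trapezoid/product gadgets, but try to force the final polytope to be a simplex --- together with an honest identification of where it breaks: the join of simplices multiplies integer-point counts rather than adding them, so the Cayley-type assembly in~\eqref{eq:W} cannot simply be replaced by an iterated join. That diagnosis is correct and is exactly the obstruction the authors are gesturing at when they leave the problem open. Your suggestion that one might instead start from a hard problem whose natural certificate is already simplicial (e.g.\ a variant of the $k$-\textsc{Frobenius Problem}) is also in line with the paper's own speculation; note that the authors pose the hardness of $k$-\textsc{Frobenius} for fixed $n$ as a separate conjecture immediately after this one, which would imply Conjecture~\ref{conj:ETP-simplex} but not obviously the translation conjecture you are addressing.

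In short: there is nothing to grade here against the paper, because the paper proves nothing about this statement. Your write-up is a fair summary of why the existing machinery does not immediately settle the conjecture, but it is not, and does not claim to be, a proof.
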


By analogy,
we believe that Theorem~\ref{t:Ehrhart} also holds for simplices:

\begin{conj}\label{conj:ETP-simplex}
\textsc{$k$-ETP} is $\NP$-hard for rational simplices $\De \in \R^{n}$,
for some fixed~$n$.
\end{conj}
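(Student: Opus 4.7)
The plan is to adapt the hardness reduction behind Theorem~\ref{t:Ehrhart} so that the resulting polytope is a simplex instead of a convex hull of several lower-dimensional pieces. The expansion-from-translation argument in the proof of Theorem~\ref{t:Ehrhart} only uses that $P$ is a rational polytope containing an integer point, so it transfers verbatim once we have a simplex analogue of Theorem~\ref{th:int_min}. Thus the task reduces to constructing, in some fixed dimension $n$, a rational simplex $\Delta \ssu \R^{n}$ such that minimizing \ts $|\Delta + t\vec e_{1}/N|$ \ts over $t \in \Z$ is \NP-hard.

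First, I would start from the \textsc{QDE}-based objective $g(t)$ of~\eqref{eq:gT} and try to rewrite $T_{1}(t)+\cdots+T_{4}(t)$ as a single product of linear forms in $t$ and $\floor{t/\be}$, at the cost of introducing additional factors. A polynomial identity in the spirit of the lemma used for Theorem~\ref{th:fluctuation} should make this feasible, and it collapses the four-layer construction of~\eqref{eq:W} into a single layer, removing the need to take a convex hull across coordinate hyperplanes $\{x_{6}=1\}$, $\{x_{5}=1,x_{6}=0\}$, etc. Second, I would replace the combinatorially-cubic product construction of Section~\ref{sec:prod} with a simplicial one: search for a simplex $\Delta \ssu \R^{n}$ whose vertical fibers realize the desired product of linear functions. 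A natural candidate is a tower of cones -- iterated pyramids starting from a trapezoid -- which keeps the vertex count at $n+1$ while still admitting horizontal slices that scale linearly with $t$.

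The main obstacle will be preserving \emph{positivity and convexity} (the same issues flagged in the introduction) while working with the much more rigid simplex geometry: unlike the convex hull $W$ from~\eqref{eq:W}, a simplex cannot freely accommodate disjoint encoding blocks in separate coordinate hyperplanes, so spurious lattice points inside $\conv$ of the intended vertices must be ruled out uniformly over the full range $0 \le t < N$. This will likely require quantitative estimates on the perturbation parameters $\ep$ and $\de$ in the style of Section~\ref{s:real}. A promising alternative route is to encode \textsc{QDE} directly into Frobenius-style simplices $\Delta_{\cj \al}$: choose the weights $\al_{i}$ so that the representation function $f_{\cj \al}(t+M)$ reproduces $g(t)$ modulo a large additive constant. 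Such a reduction would simultaneously settle the open $k$-\textsc{Frobenius} hardness question raised in~$\S$\ref{ss:finrem-min} and would be the most elegant path to Conjecture~\ref{conj:ETP-simplex}.
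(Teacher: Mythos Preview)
The statement you are addressing is a \emph{conjecture} in the paper, placed in the ``Final remarks and open problems'' section; the paper offers no proof, and the authors explicitly flag it as open (alongside the related Conjecture~\ref{conj:PT-simplex} and the $k$-\textsc{Frobenius} question). There is therefore no paper proof to compare your proposal against.

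As for the proposal itself, it is a plan, not a proof, and you are candid about this. The two concrete steps you outline both have genuine gaps. For the first, rewriting $T_1+\cdots+T_4$ as a \emph{single} product of linear forms in $t$ and $\floor{t/\be}$ is not what the lemma in the proof of Theorem~\ref{th:fluctuation} does: that lemma converts one product into a \emph{sum} of many products with positive factors, which is the opposite direction from what you need here. For the second, the ``tower of cones / iterated pyramids'' suggestion does not actually produce a polytope whose fiber over $\{x_1=\text{const}\}$ has lattice-point count equal to a product of the desired linear forms; an iterated pyramid over a trapezoid has a single apex per added dimension, so the fibers are scaled copies of the base rather than independent intervals, and the counting function is not multiplicative in the way Section~\ref{sec:prod} requires. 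The alternative route through Frobenius simplices $\Delta_{\cj\al}$ is, as you note, equivalent to settling another open problem in the paper. In short, your proposal correctly identifies the reduction-to-translation step as the easy part and the simplex geometry as the hard part, but neither of your two suggested attacks on the hard part comes with a mechanism that would actually work.
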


A significantly stronger result would be the following:

\begin{conj}  The \textsc{$k$-Frobenius Problem}
 is \NP-hard for some fixed~$n$.
\end{conj}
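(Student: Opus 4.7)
The plan is to adapt the reduction strategy underlying Theorem~\ref{t:Ehrhart}, insisting that all gadgets be realized inside a single Frobenius simplex $\Delta_{\cj\alpha^*}$. Recall that $f_{\cj\alpha^*}(t)$ counts the representations $t = c_1\alpha^*_1 + \cdots + c_n\alpha^*_n$ with $c_i \in \N$, and for fixed $n$ it is a quasi-polynomial in $t$ of degree $n-1$ whose coefficients are periodic with periods dividing the $\alpha^*_i$. The goal is to produce, from an instance $(\alpha,\beta,\gamma)$ of \textsc{QDE}, a fixed-dimensional coin vector $\cj\alpha^* \in \N^n$ and a threshold $k \in \N$ such that the question ``is there $t$ with $f_{\cj\alpha^*}(t) < k$?'' faithfully encodes the feasibility of the \textsc{QDE} instance.

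First, I would revisit the trapezoid gadgets of Section~\ref{sec:trapezoid} and try to realize each building block $p \pm q t$, $r \pm \lfloor t/\be \rfloor$ as a summand of some $f_{\cj\alpha^*}(t)$. For two coprime coins $(\alpha_1,\alpha_2)$, the denumerant already has a piecewise-linear periodic part modulo $\alpha_1\alpha_2$, and adding further coins $\alpha_3, \ldots, \alpha_n$ with carefully chosen moduli should allow the superposition of several independent periodic contributions via Chinese Remainder ideas. Second, to simulate the four terms $T_1(t), \ldots, T_4(t)$ of~\eqref{eq:T12}--\eqref{eq:T34}, one would combine these contributions with the smooth polynomial growth of degree $n-1$, choosing $k$ just above this background so that the decision $f_{\cj\alpha^*}(t) < k$ picks out precisely the ``dip'' corresponding to a solution of \textsc{QDE}. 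Finally, one would mimic the argument from the proof of Theorem~\ref{t:Ehrhart} to convert this minimization statement into the $k$-Frobenius formulation.

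The main obstacle is the extremely rigid structure of Frobenius simplices. Unlike in Theorem~\ref{t:Ehrhart}, where we could freely glue gadget polytopes by convex hulls in auxiliary dimensions (see~\eqref{eq:Q_i}--\eqref{eq:W}) and thereby express the target function as an additive combination of independent pieces, here the entire quasi-polynomial $f_{\cj\alpha^*}$ is dictated by the number-theoretic choice of $\cj\alpha^*$ alone. In particular, the \emph{product} gadgets of Section~\ref{sec:prod}, which were central to encoding the quadratic expression from \textsc{QDE}, appear difficult to realize inside a single denumerant. For this reason, a genuine proof would likely require either a clever arithmetic reduction (perhaps directly from subset sum or some Diophantine counting problem), or a new gadget design exploiting finer multiplicative properties of $f_{\cj\alpha^*}$, consistent with the fact that the statement currently stands as an open conjecture.
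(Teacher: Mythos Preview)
This statement is a \emph{conjecture} in the paper, not a theorem: the paper offers no proof and explicitly flags it as open (``It is an open problem whether the \textsc{$k$-Frobenius Problem} is $\NP$-hard when $k$ is a part of the input''). So there is nothing on the paper's side to compare against.

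Your proposal is likewise not a proof but a sketch of a possible attack together with an honest assessment of why it breaks down. You correctly identify the central obstruction: the gadget machinery of Sections~\ref{sec:trapezoid}--\ref{sec:prod} relies on taking products and convex hulls of independent pieces in auxiliary coordinates, whereas a Frobenius simplex $\Delta_{\cj\alpha^*}$ has no such free parameters --- the entire quasi-polynomial $f_{\cj\alpha^*}$ is determined by the single arithmetic datum~$\cj\alpha^*$. Your closing sentence, that ``the statement currently stands as an open conjecture,'' is exactly the status the paper assigns it. In short, there is no gap to point to because neither you nor the paper claims a proof; your discussion of obstacles is consistent with why the authors left this as a conjecture rather than a theorem.
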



\medskip

\subsection{}
Corollary~\ref{c:univ} is the type of universality result
which occasionally arise in discrete and algebraic geometry 
(see e.g.~$\S\S$12,13 in~\cite{Pak} and references therein).  
It would be interesting to find a simple or more direct 
proof of this result.  In fact, we conjecture that the dimension 
bound $d=O(\log r)$ is sharp, cf.\ Prop.~8.1 in~\cite{KannanNPC}. 

\vskip.72cm


{\small

\subsection*{Acknowledgements}
We are thankful to Fritz Eisenbrand for telling us about the 
\textsc{IPM} problem, to Sasha Barvinok for introducing us to the subject,
and to Jes\'{u}s De Loera for his insights and encouragement.
We are also grateful to Elie Alhajjar, Matt Beck, Lenny Fukshansky,
Robert Hildebrand, Ravi Kannan, Oleg Karpenkov, Matthias K\"oppe
and Kevin Woods for interesting conversations and helpful remarks.
This work was initiated while both authors were in residence
of the MSRI long term Combinatorics program in the Fall of 2017;
we thank MSRI for the hospitality.  The first author
was partially supported by the UCLA Dissertation Year Fellowship.
The second author was partially supported by the~NSF.

}

 \vskip.96cm

{\footnotesize

}
\end{document}